\documentclass[reqno,centertags,11pt,draft]{amsart}
\usepackage{amsmath,amsthm,amsfonts,amssymb,enumerate}
\usepackage[bookmarksopen=true,final]{hyperref}
\usepackage{mathrsfs}
\usepackage[nobysame]{amsrefs}
\usepackage{hyperref}
\usepackage[pdftex]{graphicx,color}
\usepackage{graphicx}
\usepackage{marvosym}
\usepackage{scalerel}
\usepackage{bbm}
\usepackage{bm}
\usepackage{setspace}
\usepackage{tikz}
\usepackage[foot]{amsaddr}
\usepackage[normalem]{ulem}

\usepackage{arydshln}

\usepackage{fancyhdr} 
\usepackage{ulem} 

\usepackage{xcolor}
\newtheorem{thm}{Theorem}[section]

\newtheorem{prop}[thm]{Proposition}
\newtheorem{rem}[thm]{Remark}
\newtheorem{defn}[thm]{Definition}

\newcommand{\R}{\mathbb{R}}
\newcommand{\C}{\mathbb C}

\newcommand{\Z}{\mathbb{Z}}
\newcommand{\N}{\mathbb{N}}
\newcommand{\T}{\mathbb{\partial\mathbb{D}}}
\newcommand{\D}{\mathbb{D}}

\renewcommand{\d}{\partial}
\newcommand{\supp}{\operatorname{supp}}

\DeclareMathOperator{\re}{Re}
\DeclareMathOperator{\im}{Im}

\newcommand{\abs}[1]{\left|#1\right|}
\newcommand{\set}[1]{\left\{#1\right\}}
\newcommand{\brkt}[1]{\left(#1\right)}

\begin{document}
\title{Zeros of Laurent multiple orthogonal polynomials on the unit circle %and their Hermite--Pad\'{e} problem
}
\author{Rostyslav Kozhan$^1$}
\email{kozhan@math.uu.se}
\author{Marcus Vaktnäs$^{1}$}
\email{marcus.vaktnas@math.uu.se}
\address{$^{1}$Department of Mathematics, Uppsala University, S-751 06 Uppsala, Sweden}
\date{\today}
\begin{abstract}
    We investigate two distinct formulations of Laurent multiple orthogonal polynomials on the unit circle, introduced in \cite{KVMLOPUC} and \cite{KVHP} respectively.
    %We investigate Laurent multiple orthogonal polynomials introduced in~\cite{KVMLOPUC} and in~\cite{KVHP}, for Angelesco and AT systems on the unit circle. 
    %We investigate Laurent multiple orthogonal polynomials for Angelesco and AT systems on the unit circle that were introduced in~\cite{KVMLOPUC} and in~\cite{KVHP}. 
    For the first formulation, we prove that all zeros lie strictly within the complex open unit disk for any Angelesco or AT system. For the second formulation, we establish normality
    %   The first main result proves that all zeros of polynomials from~\cite{KVMLOPUC} lie in the unit disc. The second main result establishes normality for the polynomials from ~\cite{KVHP}, 
    of all indices of the form $(\bm{n};\bm{n})$, $(\bm{n}+\bm{e}_j;\bm{n})$, and $(\bm{n};\bm{n}+\bm{e}_j)$  for any Angelesco or AT system, thereby enabling the full application of the Szeg\H{o} mapping and Geronimus relations from~\cite{KVHP} in the multiple orthogonality setting.
 
    %We investigate zeros of polynomials satisfying simultaneous orthogonality relations with respect to several measures. We prove that zeros lie in the unit disc for Angelesco and AT systems, for some polynomials whose uniqueness had been established in previous work. We also use the same zero counting approach to prove uniqueness of some closely related polynomials generated from a two-point Hermite--Pad\'e approximation. 
    %We investigate zeros of Laurent multiple orthogonal polynomials on the unit circle. Our main result is that they belong to the complex unit disc for Angelesco and AT systems.%, AT, and Nikishin systems.
\end{abstract}
\maketitle

\section{Introduction}

A fundamental question about orthogonal polynomials is the behaviour of their zeros. For measures supported on the real line (OPRL) they lie on the real line, and for measures supported on the unit circle $\d\D = \{z\in \C: |z|=1\}$ (OPUC) they lie inside the unit disc $\D = \{z\in \C: |z|<1\}$. 

The current paper investigates zeros of multiple orthogonal polynomials. They satisfy orthogonality relations with respect to a system of measures $(\mu_1,\ldots,\mu_r)$. For measures supported on the real line (MOPRL), such polynomials $P_{(n_1,\ldots,n_r)}$ have been studied extensively over the last couple of decades, initially motivated by rational approximation problems with applications in number theory. See \cite{Applications} for a short introduction with several applications, and the book of Nikishin and Sorokin \cite{bookNS} for a more thorough introduction. 

Unlike the usual orthogonal polynomials, $P_{(n_1,\dots,n_r)}$ may not have all zeros real when each measure is supported on the real line. Hence one would like to identify subclasses of systems where this complication does not enter. One example, introduced by Angelesco \cite{Angelesco}, is when the convex hulls of the supports of the measures are pairwise disjoint. One of the goals of this paper is to identify a similar result for measures supported on the unit circle (MOPUC), leading to all zeros lying inside the unit disc. 

%We also study AT systems, which Another example is given by AT systems (see e.g. \cite{bookNS}), which 

\subsection{Multiple Orthogonal Polynomials on the Unit Circle}

MOPUC was introduced by M\'{i}nguez and Van Assche in~\cite{MOPUC1}, using the following definition. 
\begin{defn}\label{def:normal A}
Let $\bm{\mu} = (\mu_1,\ldots,\mu_r)$ be a system of measures supported on $\d\D$, assumed to all have infinite support, and let $\bm{n} = (n_1,\dots,n_r)$ be a multi-index of non-negative integers. The type II multiple orthogonal polynomial $\Phi_{\bm{n}}$ is defined by the degree restriction $\deg{\Phi_{\bm{n}}} \leq \abs{\bm{n}} = n_1+\dots+n_r$ and the orthogonality relations 
\begin{equation}\label{eq:type II def A}
    \int_\T \Phi_{\bm{n}}(z)z^{-p}d\mu_j(z) = 0, \quad p = 0,\ldots,n_j-1, \quad j = 1,\dots,r.
\end{equation}
We say $\bm{n}$ is normal with respect to $\bm{\mu}$ if there exists a unique $\Phi_{\bm{n}}$ on the form
\begin{equation}\label{eq:monic polyomials def A}
    \Phi_{\bm{n}}(z) = z^{\abs{\bm{n}}} + k_{\abs{\bm{n}}-1}z^{\abs{\bm{n}}-1}+\ldots+k_{0}.
\end{equation}
\end{defn}
%In~\cite{MOPUC1,MOPUC2,KVMOPUC}, properties of such polynomials were established, including recurrence relations, compatibility conditions, a Christoffel--Darboux formula, and the associated Riemann--Hilbert and Hermite--Pad\'{e} problems. We describe the latter, as introduced in~\cite{MOPUC1}. 

Normality is necessary for almost any study of multiple orthogonality. In MOPRL, this is provided by Angelesco systems and AT systems \cite{bookNS}, for which a rather simple zero counting approach proves normality. In MOPUC however, only one example of normality was given for the polynomials $\Phi_{\bm{n}}$, in the paper \cite{MOPUC2}. Numerical simulations also show that $\Phi_{\bm{n}}$ may have zeros outside the unit disc when the measures are supported on pairwise disjoint arcs. 

\subsection{Laurent Multiple Orthogonal Polynomials on the Unit Circle}

Recently, there has been substantial development on MOPUC, through the papers \cite{KVMOPUC,HueMan,KVHP,KNik,KVMLOPUC}. In \cite{KVMLOPUC}, we identified Angelesco and AT systems on the unit circle (defined in Section \ref{normality section}), which are the analogue to the Angelesco and AT systems on the real line (see e.g. \cite{bookNS}). This was done through computations inspired by the proof of~\cite{Kui}. This approach suggested the following alternative approach to MOPUC. 
\begin{defn}\label{def:normal B}
The type II Laurent multiple orthogonal polynomial $\phi_{\bm{n}}$ is defined by $\phi_{\bm{n}}(z) \in \operatorname{span}\{z^p\}_{p = -\abs{\bm{n}}/2}^{\abs{\bm{n}}/2}$ and 
\begin{equation}\label{eq:type II def mod}
    \int \phi_{\bm{n}}(z)z^{-p}d\mu_j(z) = 0, \quad p = -n_j/2,-n_j/2+1,\ldots,n_j/2-1, \quad j = 1,\dots,r.
\end{equation}
We say that a multi-index $\bm{n}\in\N^r$ is $\phi$-normal with respect to $\bm{\mu}$ if there exists a unique $\phi_{\bm{n}}$ on the form 
\begin{equation}\label{eq:monic type II mod}
    \phi_{\bm{n}}(z) = z^{\abs{\bm{n}}/2} + \kappa_{\abs{\bm{n}}/2-1}z^{\abs{\bm{n}}/2-1} + \ldots + \kappa_{-\abs{\bm{n}}/2}z^{-\abs{\bm{n}}/2}.
\end{equation}
\end{defn}
Note that this definition requires a choice of square root function, except the case when each $n_j$ is even. %(which is perhaps the most interesting case). 
When $r = 1$, $z^{n/2}\phi_{n}(z)$ corresponds to the Szeg\H{o} polynomials $\Phi_{n}(z)$. For $r > 2$ however, normality in the sense of Definition \ref{def:normal A} is very different from normality in the sense of Definition \ref{def:normal B}. 

The terminology of $\phi$-normality is used to separate it from Definition \ref{def:normal A}, but we stress that we currently consider this to be a more useful notion of normality. In particular, the $\phi$-normality of every $\bm{n}\in\N^r$ was established in \cite{KVMLOPUC} for Angelesco and AT systems on the unit circle, while normality in the sense of Definition \ref{def:normal A} is a result we do not expect currently. The preliminaries on $\phi$-normality (including Angelesco and AT systems) is discussed further in Section \ref{normality section}. 

The main result of the paper proves that all zeros of $\phi_{\bm{n}}$ lie in $\D$ for Angelesco and AT systems. The proof goes through some functions $X_{\bm{n}}^{(\tau)}$, defined in Section \ref{multiple popuc section}, which should be viewed as the analogue to paraorthogonal polynomials \cite{JNT}. In Section \ref{zeros of multiple popuc section} we prove that all zeros of $X_{\bm{n}}^{(\tau)}$ lie on $\d\D$ using similar ideas to \cite{KVNikInt}. An application of the argument principle then proves our main result (Section \ref{zeros of mopuc section}). 

It recently came to our attention that the functions $X_{\bm{n}}^{(\tau)}$ have a close connection with the trigonometric polynomials studied in \cite{Mil,TrigMop2}. In the case of all $n_j$ even, they turn into exactly their trigonometric multiple orthogonal polynomials of semi-integer degree. In this paper they define a notion of trigonometric AT system, very closely related to our notion of AT system, and prove a normality condition. Through results of this paper, we will see that they essentially proved $\phi$-normality in AT system for the case when every $n_j$ is even, 10 years before our paper \cite{KVMLOPUC}. 

\subsection{General Two-Point Hermite--Pad\'e polynomials}

In \cite{KVHP} we took a different approach to MOPUC, through a general Hermite--Pad\'e approximation problem. The Laurent polynomials are given by the following definition.
\begin{defn}\label{def:hp polynomials}
For two multi-indices $\bm{n} \in \N^r$ and $\bm{m} \in \N^r$, we define the Laurent polynomials $\Phi_{\bm{n},\bm{m}}$ by  $\Phi_{\bm{n},\bm{m}}(z) \in \operatorname{span}\set{z^p}_{p = -\abs{\bm{m}}}^{\abs{\bm{n}}}$ and
    \begin{equation}
\label{eq:generalized hermite pade orthogonality intro}
    \int \Phi_{\bm{n},\bm{m}}(w)w^{-p}d\mu_j(w) = 0, \quad p = -m_j,\dots,n_j-1, \quad j = 1,\dots,r.
\end{equation}
The pair of multi-indices $(\bm{n},\bm{m})$ is normal if there exists a unique $\Phi_{\bm{n},\bm{m}}$ on the form
\begin{equation}\label{eq:moprlIIspan}
    \Phi_{\bm{n},\bm{m}}(z) = z^{\abs{\bm{n}}} + \ldots + \alpha_{\bm{n},\bm{m}}z^{-\abs{\bm{m}}}.
    \end{equation} 
\end{defn}
Observe that under the preceding definitions, $\Phi_{\bm{n}} = \Phi_{\bm{n},\bm{0}}$ and $\phi_{2\bm{n}} = \Phi_{\bm{n},\bm{n}}$. 
%We are mainly interested in the case $\bm{n} \approx \bm{m}$. 
Combined with the results of~\cite{KVMLOPUC}, this implies that all indices $(\bm{n},\bm{n})$ are normal for Angelesco and AT systems. A principal result in \cite{KVHP} establishes the Szeg\H{o} mapping and the Geronimus relations for multiple orthogonal polynomials, provided that indices of the form $(\bm{n},\bm{n})$, $(\bm{n}+\bm{e}_j,\bm{n})$ and $(\bm{n},\bm{n}+\bm{e}_j)$ are normal. As the second main result of the current paper, we prove that these specific indices are indeed normal for any Angelesco or AT system, thereby enabling the direct application of the Szeg\H{o} mapping and Geronimus relations derived in \cite{KVHP}.
The notion of paraorthogonal polynomials again plays an important part in the proof. 

We remark that the Laurent polynomials $\Phi_{\bm{n},\bm{n}}$, $\Phi_{\bm{n}+\bm{e}_j,\bm{n}}$ are closely related to those studied by Huertas and Ma\~nas in \cite{HueMan}, where the question of normality was left open.

\section{Normality}\label{normality section}

%Let $\bm\mu=(\mu_1,\ldots,\mu_r)$ be a system of positive measures on the unit circle each with infinite support.  

In this section we go over the basic preliminaries about normality. For the remainder of the paper we always assume the measures have infinite support. We also fix a branch of the square root function 
\begin{equation}\label{eq:sqrt}
z^{k/2}=|z|^{k/2} \exp({i k\arg_{[t_0,t_0+2\pi)}} (z)/2), \quad k \in \Z,
\end{equation}
for some chosen $t_0\in\R$. The choice will depend on the system we work with. 

\begin{rem}
    Note that $z^{k/2} = (z^{1/2})^k$, but not necessarily $z^{k/2} = (z^k)^{1/2}$. Also, $z^{-k/2} = \overline{z^{k/2}}$, but not necessarily $z^{-k/2} = \bar{z}^{k/2}$. 
\end{rem}

For $f(z) = \sum_{k = -M/2}^{N/2}c_kz^k$, where $N$ and $M$ have the same parity, we write $f^\sharp$ for the function 
\begin{equation}
    f^\sharp(z) = \overline{f(1/\bar{z})} = \sum_{k = -N/2}^{M/2}\bar{c}_{-k}z^k.
\end{equation}

\subsection{$\phi$-normality}
Note that $\phi_{\bm{n}}^\sharp(z) \in \operatorname{span}\{z^p\}_{p = -\abs{\bm{n}}/2}^{\abs{\bm{n}}/2}$ and
\begin{equation}\label{eq:type II def mod sharp}
    \int \phi_{\bm{n}}^{\sharp}(z)z^{-p}d\mu_j(z) = 0, \quad p = -n_j/2+1,\ldots,n_j/2, \quad j = 1,\dots,r.
\end{equation}
We then see that $\bm{n}$ is $\phi$-normal if and only if there is a unique $\phi_{\bm{n}}^{\sharp}$ satisfying \eqref{eq:type II def mod sharp} on the form
\begin{equation}\label{eq:monic type II mod sharp}
        \phi^\sharp_{\bm{n}}(z) = \bar\kappa_{-\abs{\bm{n}}/2}z^{\abs{\bm{n}}/2} + \ldots+ \bar\kappa_{\abs{\bm{n}}/2-1}z^{-\abs{\bm{n}}/2+1} + z^{-\abs{\bm{n}}/2}.
\end{equation} 
When $\bm{n}$ is $\phi$-normal we always assume the normalizations \eqref{eq:monic type II mod} and \eqref{eq:monic type II mod sharp}.

When solving \eqref{eq:type II def mod}-\eqref{eq:monic type II mod} or \eqref{eq:type II def mod sharp}-\eqref{eq:monic type II mod sharp} for the coefficients, we get a system of equations with matrix on the form
\begin{equation}\label{eq:moment matrix mod}
        T_{\bm{n}} =
        \left(\begin{array}{c}
            T^{(1)}_{n_1,|\bm{n}|} 
            %\\        \vdots \\
            \\[0.4em]
        \hdashline[0.5pt/2pt]
        \\[-1.2em]
         \vdots \\[0.2em]
        \hdashline[0.5pt/2pt]
        \\[-1.0em]
            T^{(r)}_{n_r,|\bm{n}|}
        \end{array}
        \right),
    \end{equation}
where $T^{(j)}_{n_j,|\bm{n}|}$ is the $n_j\times |\bm{n}|$ matrix given by
    \begin{equation}\label{eq:H1}
         T^{(j)}_{n_j,|\bm{n}|} =
         \begin{pmatrix}
            \int z^{-\abs{\bm{n}}/2}z^{n_j/2} d\mu_j(z) & \cdots & \int z^{\abs{\bm{n}}/2-1}z^{n_j/2} d\mu_j(z) \\
            \int z^{-\abs{\bm{n}}/2}z^{n_j/2-1} d\mu_j(z) & \cdots & \int z^{\abs{\bm{n}}/2-1}z^{n_j/2-1} d\mu_j(z) \\
            \vdots & \ddots & \vdots \\ 
            \int z^{-\abs{\bm{n}}/2}z^{-n_j/2+1} d\mu_j(z) & \cdots & \int z^{\abs{\bm{n}}/2-1}z^{-n_j/2+1} d\mu_j(z) \\
        \end{pmatrix}.
    \end{equation}

The next result presents some of the  conditions that are equivalent to $\phi$-normality of a multi-index $\bm{n}$.
\begin{prop}\label{prop:phi normality criteria}
    The following statements are equivalent:
    \begin{itemize}
        \item There is a unique $\phi_{\bm{n}}$ on the form \eqref{eq:monic type II mod} satisfying \eqref{eq:type II def mod}.
        \item There is no $\phi_{\bm{n}}(z) \in \operatorname{span}\{z^p\}_{p = -\abs{\bm{n}}}^{\abs{\bm{n}}-1}\setminus\{0\}$ satisfying \eqref{eq:type II def mod}.
        \item There is a unique $\phi_{\bm{n}}^\sharp$ on the form \eqref{eq:monic type II mod sharp} satisfying \eqref{eq:type II def mod sharp}.
        \item There is no $\phi_{\bm{n}}^\sharp \in \operatorname{span}\{z^p\}_{p = -\abs{\bm{n}}+1}^{\abs{\bm{n}}}\setminus\{0\}$ satisfying \eqref{eq:type II def mod sharp}.
        \item $\det(T_{\bm{n}}) \neq 0$. 
    \end{itemize}
\end{prop}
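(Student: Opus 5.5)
The proof is linear algebra on a square system; the sharp versions are handled by a conjugation symmetry. I read the spans in the second and fourth bullets as the spans of the monomials that remain once the normalized monomial is removed. That is, $\operatorname{span}\{z^p\}_{p=-\abs{\bm{n}}/2}^{\abs{\bm{n}}/2-1}$ for $\phi_{\bm{n}}$, and $\operatorname{span}\{z^p\}_{p=-\abs{\bm{n}}/2+1}^{\abs{\bm{n}}/2}$ for $\phi^\sharp_{\bm{n}}$, with half-integer steps of $1$. These are exactly the spaces whose coefficient vectors have length $\abs{\bm{n}}$.

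\emph{Step 1: the first, second and fifth conditions are equivalent.} Write $\phi_{\bm{n}}(z)=z^{\abs{\bm{n}}/2}+q(z)$, where $q$ lies in the $\abs{\bm{n}}$-dimensional span above with coefficient vector $\kappa=(\kappa_{-\abs{\bm{n}}/2},\dots,\kappa_{\abs{\bm{n}}/2-1})^T$. The relations \eqref{eq:type II def mod} consist of $\sum_j n_j=\abs{\bm{n}}$ linear equations. Ordering them as in \eqref{eq:H1}, they read $T_{\bm{n}}\kappa=-b$, where $b$ collects the moments $\int z^{\abs{\bm{n}}/2}z^{-p}\,d\mu_j$. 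Since $T_{\bm{n}}$ is square, the system has a unique solution iff $\det T_{\bm{n}}\neq 0$. If $\det T_{\bm{n}}=0$, the system has either no solution or infinitely many. In the same way, a nonzero $q$ satisfying \eqref{eq:type II def mod} is exactly a nonzero vector in $\ker T_{\bm{n}}$, so the second condition is also equivalent to $\det T_{\bm{n}}\ne0$.

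\emph{Step 2: transferring to the sharp versions.} For $f$ in the relevant span and $\abs{z}=1$, we have $f^\sharp(z)=\overline{f(z)}$. This uses $1/\bar z=z$ on $\d\D$ together with the identity $z^{-k/2}=\overline{z^{k/2}}$ from the Remark, which holds for the fixed branch \eqref{eq:sqrt}. Since $\mu_j$ is positive,
\[
\overline{\int f(z)z^{-p}\,d\mu_j(z)}=\int f^\sharp(z)\,z^{p}\,d\mu_j(z).
\]
Hence $f$ satisfies \eqref{eq:type II def mod} iff $f^\sharp$ satisfies \eqref{eq:type II def mod sharp}, with $p\mapsto -p$ sending $\{-n_j/2,\dots,n_j/2-1\}$ onto $\{-n_j/2+1,\dots,n_j/2\}$. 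The map $f\mapsto f^\sharp$ is a conjugate-linear bijection. It carries the monic form \eqref{eq:monic type II mod} onto the form \eqref{eq:monic type II mod sharp}, and the lower span for $\phi_{\bm{n}}$ onto the lower span for $\phi^\sharp_{\bm{n}}$. Therefore the third and fourth conditions are equivalent to the first and second respectively, which closes the chain.

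\emph{Main obstacle.} The only delicate point is the branch bookkeeping with half-integer exponents. One must check that the exponent sets match under $\sharp$, and that the identity $\overline{z^{k/2}}=z^{-k/2}$ is applied in the form the Remark guarantees, namely to $(z^{1/2})^k$ and not to $(z^k)^{1/2}$. I would verify this directly from \eqref{eq:sqrt} before carrying out Step 2.
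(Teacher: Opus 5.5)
This is correct and is essentially the paper's proof, which just observes that each of the four problems is a square linear system governed by $T_{\bm{n}}$. Your conjugate-linear bijection $f\mapsto f^\sharp$ is what justifies this for the two sharp problems: solving directly for the coefficients of $\phi_{\bm{n}}^\sharp$ gives $\overline{T_{\bm{n}}}$ up to a reordering of rows and columns, whose determinant vanishes exactly when $\det T_{\bm{n}}$ does.

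Your reading of the spans in the second and fourth bullets as the $\abs{\bm{n}}$-dimensional spaces $\operatorname{span}\{z^p\}_{p=-\abs{\bm{n}}/2}^{\abs{\bm{n}}/2-1}$ and $\operatorname{span}\{z^p\}_{p=-\abs{\bm{n}}/2+1}^{\abs{\bm{n}}/2}$ is the intended one. Taken literally, the printed spans are $2\abs{\bm{n}}$-dimensional, so those two bullets would fail for every $\bm{n}\neq\bm{0}$.
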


\begin{proof}
    For each of the first four statements, solving for the coefficients results in a system of equation with matrix equal to $T_{\bm{n}}$. 
\end{proof}

We remark that $\bm{0}$ is always $\phi$-normal, and the above result holds with the convention $\det(T_{\bm{0}}) = 1$.

\subsection{Angelesco systems on the unit circle}\label{ss:Angelesco}
\begin{defn}\label{def:Angelesco}
    $\bm{\mu} = (\mu_1,\dots,\mu_r)$ is an Angelesco system on $\d\D$ if there are closed arcs $\Gamma_j\subset \T$ such that $\supp\,\mu_j\subseteq \Gamma_j$, $j = 1,\dots,r$, and $\Gamma_j$ intersects $\Gamma_k$ at no more than the endpoints of the arcs, whenever $j \neq k$.
\end{defn}
%\begin{rem}
%    In the above definition, it is easy to see that $\Gamma_j\cap \Gamma_k$ can consist of two points only if $r=2$ and the union of $\Gamma_1$ and $\Gamma_2$ is equal to $\T$.
%\end{rem}
We assume that $t_0$ and the order of $\Gamma_j$'s is chosen so that $t_0 \le  \theta_1 \le \dots \le \theta_r \le t_0 + 2\pi$ holds whenever $e^{i\theta_1} \in \Gamma_1,\dots,e^{i\theta_r} \in \Gamma_r$. We choose the branch of the square root function to be~\eqref{eq:sqrt} with this  $t_0$. 
Let us additionally require that $\mu_r$ does not have a point mass at $e^{it_0}$.

By direct computation of the determinant of the moment matrix $T_{\bm{n}}$ we proved the following result. 

\begin{thm}[\cite{KVMLOPUC}]\label{thm:Angelesco}
    If $\bm{\mu}$ is an Angelesco system on the unit circle then every $\bm{n} \in \N^r$ is $\phi$-normal.
    %Angelesco systems on the unit circle are perfect.
\end{thm}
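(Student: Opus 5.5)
By Proposition~\ref{prop:phi normality criteria}, it suffices to show that there is no nonzero $\phi_{\bm{n}}^\sharp \in \operatorname{span}\{z^p\}_{p=-\abs{\bm{n}}/2+1}^{\abs{\bm{n}}/2}$ satisfying \eqref{eq:type II def mod sharp}. Equivalently, by a shift of the index range, it suffices to show that no nonzero Laurent polynomial $L$ spanned by $|\bm{n}|$ consecutive half-integer powers can be orthogonal to the $n_j$ consecutive powers prescribed for $\mu_j$, for every $j$. The plan is a zero-counting argument, the circle analogue of the classical Angelesco argument on the real line.

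\textbf{Step 1: reduce to a real trigonometric function.} Suppose such an $L\neq 0$ exists. Multiplying by a suitable power $z^{s/2}$ and using the symmetric exponent window, I will form a function $g(\theta)=e^{-i\abs{\bm{n}}\theta/2}\,P(e^{i\theta})$. Here $P$ is an ordinary polynomial of degree at most $\abs{\bm{n}}-1$ (or $\abs{\bm{n}}$, depending on the shift), and $g$ is a trigonometric polynomial of ``semi-integer degree'' on $[t_0,t_0+2\pi)$. The square root branch \eqref{eq:sqrt} makes $z^{k/2}$ a continuous function of $\theta$ on this interval. Because the orthogonality window for $\mu_j$ is symmetric about $0$, each condition says that $\int g(\theta)\,t(\theta)\,d\mu_j=0$ for every trigonometric polynomial $t$ of semi-integer degree $<n_j/2$ with the matching parity. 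Since this space is closed under conjugation, the real and imaginary parts of $L e^{i\alpha\theta}$ also satisfy the conditions, for a suitable phase $\alpha$. I will therefore replace $L$ by a nonzero real-valued combination $R(\theta)$ of the form $\sum c_k\cos(k\theta)+d_k\sin(k\theta)$, with $k$ running over half-integers up to $(\abs{\bm{n}}-1)/2$.

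\textbf{Step 2: count zeros on each arc.} For each $j$, the real orthogonality of $R$ against all real semi-integer trigonometric polynomials of degree $<n_j/2$ on $\Gamma_j$ forces $R$ to change sign at least $n_j$ times in the interior of $\Gamma_j$. To see this, suppose $R$ had at most $n_j-1$ sign changes in $\Gamma_j$. One builds a real function $q(\theta)=\prod_{i}\sin((\theta-\theta_i)/2)$ over those sign changes, padded with points placed outside $\Gamma_j$ to fix the parity. This $q$ lies in the orthogonality space and has the same sign pattern as $R$ on $\Gamma_j$. Then $\int Rq\,d\mu_j>0$, because $\mu_j$ has infinite support. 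Here the requirement that $\Gamma_j$ be an arc not containing $e^{it_0}$ in its interior (and that $\mu_r$ has no mass at $e^{it_0}$) is used: it makes each factor $\sin((\theta-\theta_i)/2)$ single-signed away from $\theta_i$ within the chart. Since the arcs overlap only at endpoints, summing over $j$ gives at least $\abs{\bm{n}}$ distinct zeros of $R$ in $[t_0,t_0+2\pi)$.

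\textbf{Step 3: contradiction.} Any nonzero real trigonometric polynomial of semi-integer degree $\le(\abs{\bm{n}}-1)/2$ has at most $\abs{\bm{n}}-1$ zeros in a half-open period of length $2\pi$. This follows because $e^{i(\abs{\bm{n}}-1)\theta/2}R(\theta)$ is a polynomial in $e^{i\theta}$ of degree $\abs{\bm{n}}-1$. This contradicts Step 2, so $L=0$ and $\bm{n}$ is $\phi$-normal.

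The main obstacle is the parity and branch bookkeeping. When $n_j$ or $\abs{\bm{n}}$ is odd, the functions are anti-periodic rather than periodic. I then need to check that the sign-matching function $q$ really lies in the correct span, with exponents ranging over $-n_j/2,\dots,n_j/2-1$ after the real-part reduction. I also need the Step 1 reduction to realness to preserve exactly the conditions for each $j$ simultaneously. If the realness reduction fails for mixed parities, the fallback is to avoid it: count zeros of the complex function $g$ directly via the argument of $e^{-i\abs{\bm{n}}\theta/2}P$, or compute $\det T_{\bm{n}}$ as a multiple integral of Vandermonde-type products over $\Gamma_1^{n_1}\times\cdots\times\Gamma_r^{n_r}$ and show that the integrand has constant sign thanks to the ordering of the arcs.
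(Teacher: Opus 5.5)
Your argument is correct, and it coincides with the \emph{second} proof of this theorem that the paper gives in the remark after Theorem~\ref{thm:zero counting}. The correspondence is step by step:
\begin{itemize}
\item Multiplying $\phi_{\bm n}^\sharp$ by $z^{-1/2}$ puts you in the setting of Proposition~\ref{prop:popuc normality}.
\item Your passage to a real-valued $R$ is that proposition with $\tau=1$.
\item Your count of at least $n_j$ sign changes in each $\operatorname{Int}(\Gamma_j)$ is exactly the proof of Theorem~\ref{thm:zero counting}. There the parity is repaired by the factor $e^{-it_0/2}z^{1/2}$, whose imaginary part $\sin\frac{\theta-t_0}{2}$ is $\geq 0$ on the whole chart $[t_0,t_0+2\pi)$. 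Your padding does the same if you put the extra zero at $e^{it_0}$. This choice also covers the case where $\Gamma_j$ leaves no room ``outside'', e.g.\ $r=1$; the assumption that $\mu_r$ has no mass at $e^{it_0}$ is exactly what makes it harmless.
\end{itemize}

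The proof the statement is attributed to, in \cite{KVMLOPUC}, takes a different route: it computes $\det T_{\bm n}$ directly, which is the route you list as a fallback. The two routes buy different things:
\begin{itemize}
\item The determinant computation treats Angelesco and AT systems with one method.
\item Zero counting is more elementary for Angelesco systems. It also yields the location of sign changes, which the paper then uses for the zeros of $X_{\bm n}^{(\tau)}$. For AT systems, however, it needs a separate Chebyshev-type argument.
\end{itemize}

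Finally, your worry about mixed parities is unnecessary. Once the orthogonality windows are the symmetric ones $\{-(n_j-1)/2,\dots,(n_j-1)/2\}$, each test space is closed under conjugation on $\d\D$, since $\overline{z^{k/2}}=z^{-k/2}$ there. This holds whatever the parities of $n_j$ and $\abs{\bm n}$, so the real-part reduction needs no phase $\alpha$ and no fallback.
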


\subsection{AT systems on the Unit Circle}\label{ss:AT}
\hfill\\

    A collection of continuous real-valued functions $\set{u_j(t)}_{j=1}^n$ on an interval $[a,b]$ is a Chebyshev system on $[a,b]$ if the determinants
    \begin{equation}\label{eq:Chebyshev}
        W_n(\bm{x}):=\det
        \begin{pmatrix}
            u_1(x_1) & u_1(x_2) & \cdots & u_1(x_n) \\
            u_2(x_1) & u_2(x_2) & \cdots & u_2(x_n) \\
            \vdots & \vdots & \ddots & \vdots \\
            u_n(x_1) & u_n(x_2) & \cdots & u_n(x_n) \\
        \end{pmatrix} \ne 0
    \end{equation}
    for every choice of different points $x_1,\ldots,x_n\in[a,b]$. By continuity, the sign of the determinant in~\eqref{eq:Chebyshev} is always positive or always negative, if we additionally introduce the ordering $x_1<x_2<\ldots<x_n$. %As long as $W_n(\bm{x})$ does not change sign, we may extend the definition of Chebyshev system to allow $W_n(\bm{x}) = 0$ on a measure zero set, without breaking the results below. 

    For a function $f(\theta)$ we use the notation
    \begin{equation}
        \operatorname{Trig}_m(f) = \begin{cases}
            \set{f(\theta)\cos{\frac{2k-1}{2}\theta},f(\theta)\sin{\frac{2k-1}{2}\theta}}_{k = 1}^{m/2}, & \mbox{ if } m \mbox{ is even},
            \\
            \set{1}\cup\set{f(\theta)\cos{k\theta},f(\theta)\sin{k\theta}}_{k = 1}^{(m-1)/2}, & \mbox{ if } m \mbox{ is odd}.
        \end{cases}
    \end{equation}

    \begin{defn}\label{def:AT}
        Assume $d\mu_j(e^{i\theta})=w_j(\theta)d\mu(e^{i\theta})$, $j = 1,\dots,r$, for a measure $\mu$ supported on the arc $\Gamma = \{e^{i\theta}:\alpha\le \theta\le \beta\}$ with $0<\beta-\alpha\le 2\pi$. Then $\bm{\mu}$ is an AT system on $\Gamma$ for the index $\bm{n} = (n_1,\ldots,n_r)$ if 
        \begin{equation}\label{eq:AT}
            \operatorname{Trig}_{\bm{n}}(\bm{\mu}) = \bigcup_{j=1}^r \operatorname{Trig}_{n_j}(w_j)
        \end{equation}
        is a Chebyshev system on $[\alpha,\beta]$.
    \end{defn}
    %In particular, AT systems on the the whole $\T$ corresponds to $\alpha=0$, $\tau=2\pi$. Note that even though we are on the circle, we do not require the {\it periodic} Chebyshev property (which is only defined for systems with an odd number of elements, see~\cite{KarStu,KreNud}), since, in particular, cosine/sine of the half-angle are not $2\pi$-periodic, and $w_j$ can have a discontinuity at a point.

    For AT system on the unit circle, the choice $t_0 = \alpha$ is natural. Through direct computation of the $\det{T_{\bm{n}}}$ we proved the following result. 
    
    \begin{thm}[\cite{KVMLOPUC}]\label{thm:AT}
        If $\bm{\mu}$ is an AT system on the unit circle then every $\bm{n} \in \N^r$ is $\phi$-normal.
    \end{thm}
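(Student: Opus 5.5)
By Proposition~\ref{prop:phi normality criteria}, it suffices to show that no nonzero Laurent polynomial $\phi\in\operatorname{span}\{z^p\}_{p=-\abs{\bm{n}}/2}^{\abs{\bm{n}}/2-1}$ satisfies the orthogonality relations \eqref{eq:type II def mod}. I would argue by contradiction and count sign changes on the arc $\Gamma=\{e^{i\theta}:\alpha\le\theta\le\beta\}$, in the spirit of the classical AT-system argument for MOPRL.

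\textbf{Step 1 (type I reformulation).} Rather than attacking type II directly, I would use the fact that $T_{\bm{n}}$ is square, so $\det T_{\bm{n}}\neq0$ is equivalent to the injectivity of $T_{\bm{n}}^{*}$. A nontrivial kernel vector of $T_{\bm{n}}^{*}$ gives, for each $j$, a Laurent polynomial $A_j\in\operatorname{span}\{z^{p}\}_{p=-n_j/2+1/2}^{n_j/2-1/2}$ (shifted by half-integers as the indexing in \eqref{eq:H1} dictates) such that
\[
\sum_j\int A_j(z)\,z^{k}\,d\mu_j(z)=0,\qquad k=-\abs{\bm{n}}/2+1/2,\dots,\abs{\bm{n}}/2-1/2 .
\]
Here the exponents $k$ range over the $\abs{\bm{n}}$ indices matching the columns of $T_{\bm{n}}$, with the half-integer shift absorbed consistently.

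\textbf{Step 2 (the linear form as a Chebyshev combination).} Set $z=e^{i\theta}$ and write $F(\theta)=\sum_j A_j(e^{i\theta})\,w_j(\theta)$. Each $A_j(e^{i\theta})$ is a combination of $e^{ik\theta}$ with $k$ running symmetrically over half-integers or integers according to the parity of $n_j$. Taking real and imaginary parts, or multiplying by a suitable unimodular constant, $F$ becomes a real linear combination of the functions in $\operatorname{Trig}_{n_j}(w_j)$. The one delicate point is the odd case, where the constant term appears unmultiplied by $w_j$. I would match the paper's convention by observing that $A_j$ is then centered at integer exponents, so the constant $1$ in $\operatorname{Trig}_m$ should be read as $w_j$ times a constant. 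The total number of real parameters equals $\abs{\bm{n}}$ real dimensions, and the orthogonality conditions say that $F\,d\mu$ annihilates $e^{ik\theta}$ for $\abs{\bm{n}}$ consecutive half-integer-shifted $k$. Equivalently, $F$ is orthogonal to all real trigonometric functions of ``half-degree'' $\le(\abs{\bm{n}}-1)/2$.

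\textbf{Step 3 (sign-change count).} By the Chebyshev property of $\operatorname{Trig}_{\bm{n}}(\bm{\mu})$, a nontrivial real combination has at most $\abs{\bm{n}}-1$ zeros in $[\alpha,\beta]$, so $F$ has at most $\abs{\bm{n}}-1$ sign changes there. On the other hand, for any $m\le\abs{\bm{n}}-1$ points $\theta_1<\dots<\theta_m$ in $(\alpha,\beta)$ one can build a real trigonometric function in the annihilated span that vanishes exactly there with sign changes, namely $\prod_i\sin\frac{\theta-\theta_i}{2}$ times a suitable fixed factor to fix parity. Since $\beta-\alpha\le2\pi$, the factor $\sin\frac{\theta-\theta_i}{2}$ has constant sign on each side of $\theta_i$ within the arc. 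Choosing the $\theta_i$ at the sign changes of $F$ makes $F$ times this product one-signed, while its integral against $\mu$ must vanish. Since $\mu$ has infinite support, this forces $F\equiv0$ $\mu$-a.e. The Chebyshev property, applied at enough support points, then forces all coefficients to vanish, which is a contradiction.

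\textbf{Main obstacle.} I expect the hardest part to be the parity bookkeeping in Step 3. The product $\prod\sin\frac{\theta-\theta_i}{2}$ lies in the span of $e^{ik\theta}$ with $k$ half-integers when $m$ is odd and integers when $m$ is even. Padding with an extra node placed at an endpoint (or at $e^{it_0}$, using that $\mu_r$ has no mass there) is needed to make $m\equiv\abs{\bm{n}}-1$ match the exponent set. I would handle this by adding such a dummy node whenever the parity is wrong.
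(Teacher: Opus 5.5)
Your proof is correct, but it takes a different route from the paper. The paper does not reprove this theorem. It relies on \cite{KVMLOPUC}, where $\det T_{\bm n}$ is evaluated directly following the idea of \cite{Kui}, i.e.\ presumably through an integral representation of the determinant whose integrand has constant sign on ordered tuples in $[\alpha,\beta]$.

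You instead run the classical type I sign-change argument from MOPRL:
\begin{itemize}
\item After the half-integer shift, both the spans of the $A_j$ and the annihilated exponents $k=-(\abs{\bm n}-1)/2,\dots,(\abs{\bm n}-1)/2$ are symmetric. Hence $\re F$ or $\re(-iF)$ is a nontrivial real element of $\operatorname{span}\operatorname{Trig}_{\bm n}(\bm\mu)$ with the same orthogonality. This is the type I analogue of Proposition~\ref{prop:popuc normality}.
\item The explicit products $\prod_i\sin\frac{\theta-\theta_i}{2}$, padded by one endpoint node when the parity is wrong, then serve as test functions.
\end{itemize}
The determinant route gives an explicit representation of $\det T_{\bm n}$, which carries more information than non-vanishing. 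Your route is more elementary. It uses only the zero bound supplied by the Chebyshev property, never any interpolation property of Chebyshev systems, because the test functions are built from half-angle sines rather than from $\operatorname{Trig}_{\bm n}(\bm\mu)$. It is essentially the type I counterpart of the zero-counting proof the paper mentions in the remark after Theorem~\ref{thm:zero counting}.

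A few small corrections:
\begin{itemize}
\item In Step 2, the valid justification is the real-part step, which relies on the symmetry of the exponent set. A unimodular rescaling cannot make a complex $F$ real in general.
\item The hypothesis that $\mu_r$ has no mass at $e^{it_0}$ belongs to the Angelesco setup and is not available here, but you do not need it. A node at $\alpha$ or $\beta$ gives a factor of constant sign on $[\alpha,\beta]$, and finitely many extra zeros are harmless because $\mu$ has infinite support.
\item For the same reason, you can conclude directly that $\int FG\,d\mu\neq0$, without passing through ``$F=0$ $\mu$-a.e.''
\item Your reading of the constant $1$ in $\operatorname{Trig}_m$ (for $m$ odd) as $w_j$ is the right one. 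Read literally, the union could contain the function $1$ twice and could not be a Chebyshev system.
\end{itemize}
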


    In \cite{KNik}, the AT property was verified for Nikishin systems on the unit circle, hence providing a large class of systems for which $\phi$-normality holds for a large set of indices.

\subsection{$\Phi$-normality}

We now discuss the polynomials from Definition \ref{def:hp polynomials}. We have the following related definition. 

\begin{defn}
    The Laurent polynomials $\Phi_{\bm{n},\bm{m}}^*$ are defined by the degree restriction $\Phi_{\bm{n},\bm{m}}^*(z) \in \operatorname{span}\set{z^p}_{p = -\abs{\bm{m}}}^{\abs{\bm{n}}}$ and the orthogonality relations
    \begin{equation}
\label{eq:generalized hermite pade orthogonality star}
    \int \Phi_{\bm{n},\bm{m}}^*(w)w^{-p}d\mu_j(w) = 0, \quad p = -m_j+1,\dots,n_j, \quad j = 1,\dots,r.
\end{equation}
\end{defn}
For the polynomials $\Phi_{\bm{n},\bm{m}}^*$ we want the normalization
\begin{equation}\label{eq:moprlIIspanStar}
    \Phi_{\bm{n},\bm{m}}^*(z) = \beta_{\bm{n},\bm{m}}z^{\abs{\bm{n}}} + \ldots + z^{-\abs{\bm{m}}}.
\end{equation}
The analogue of Proposition \ref{prop:phi normality criteria} for $\Phi$-normality is the following result. 
\begin{prop}\label{prop:normality criteria hp}
    Normality of $(\bm{n},\bm{m})$ is equivalent to the following statements.
    \begin{itemize}
        \item There is a unique $\Phi_{\bm{n},\bm{m}}$ on the form \eqref{eq:moprlIIspan} satisfying \eqref{eq:generalized hermite pade orthogonality intro}.
        \item There is no $\Phi_{\bm{n},\bm{m}}(z) \in \operatorname{span}\{z^p\}_{p = -\abs{\bm{m}}}^{\abs{\bm{n}}-1}\setminus\{0\}$ satisfying \eqref{eq:generalized hermite pade orthogonality intro}.
        \item There is a unique $\Phi_{\bm{n},\bm{m}}^*$ on the form \eqref{eq:moprlIIspanStar} satisfying \eqref{eq:generalized hermite pade orthogonality star}.
        \item There is no $\Phi_{\bm{n},\bm{m}}^* \in \operatorname{span}\{z^p\}_{p = -\abs{\bm{m}}+1}^{\abs{\bm{n}}}\setminus\{0\}$ satisfying \eqref{eq:generalized hermite pade orthogonality star}.
    \end{itemize}
\end{prop}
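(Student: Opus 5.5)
The plan mirrors the proof of Proposition \ref{prop:phi normality criteria}: all four statements reduce to one square linear system, and each statement says exactly that its matrix is invertible. Write $N=\abs{\bm{n}}$ and $M=\abs{\bm{m}}$. The space $\operatorname{span}\{z^p\}_{p=-M}^{N}$ has dimension $N+M+1$, while the conditions \eqref{eq:generalized hermite pade orthogonality intro} number $\sum_j (n_j+m_j) = N+M$.

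\textbf{Type \eqref{eq:moprlIIspan} statements.} Fix the coefficient of $z^{N}$ to be $1$. The remaining unknowns are the coefficients of $z^{-M},\dots,z^{N-1}$, which is $N+M$ of them. Equation \eqref{eq:generalized hermite pade orthogonality intro} then becomes a square system $A\,c = b$, where $A$ is the $(N+M)\times(N+M)$ moment matrix whose rows are indexed by $(j,p)$ with $-m_j\le p\le n_j-1$ and whose columns are indexed by $k=-M,\dots,N-1$. The entries are $\int w^{k-p}\,d\mu_j(w)$.

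Normality, i.e.\ the first bullet, says this system has a unique solution, which holds if and only if $\det A\neq 0$. The second bullet says the homogeneous system $A c = 0$ has only the trivial solution, where now $c$ ranges over coefficient vectors of nonzero elements of $\operatorname{span}\{z^p\}_{p=-M}^{N-1}$. This is again equivalent to $\det A\neq0$.

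\textbf{Type \eqref{eq:moprlIIspanStar} statements.} Apply the map $\sharp$. Since the measures are positive on $\d\D$, we have $\overline{w} = 1/w$ there. Hence $\int f^\sharp(w) w^{-p}\,d\mu_j(w) = \overline{\int f(w) w^{p}\,d\mu_j(w)}$.

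The map $f\mapsto f^\sharp$ is a conjugate-linear bijection from $\operatorname{span}\{z^p\}_{p=-M}^{N}$ onto $\operatorname{span}\{z^p\}_{p=-N}^{M}$. This is not the right span, so the argument needs care here. Instead I would directly write out the system for $\Phi^*_{\bm{n},\bm{m}}$. Fix the coefficient of $z^{-M}$ to be $1$. The unknowns are then the coefficients of $z^{-M+1},\dots,z^{N}$, again $N+M$ of them, and the rows are indexed by $(j,p)$ with $-m_j+1\le p\le n_j$.

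Substitute $k'=k-1$ and $p'=p-1$. The entries become $\int w^{k-p}\,d\mu_j = \int w^{k'-p'}\,d\mu_j$, with $k'$ ranging over $-M,\dots,N-1$ and $p'$ over $-m_j,\dots,n_j-1$. This is exactly the matrix $A$. So the third and fourth bullets are each equivalent to $\det A\neq 0$, by the same unique-solvability versus trivial-kernel reasoning.

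The only step requiring care is this index-shift identification: the shift of $p$ by one in \eqref{eq:generalized hermite pade orthogonality star} must exactly compensate the shift of the span. Otherwise every step is standard linear algebra.
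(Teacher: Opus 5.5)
Your proof is correct and is essentially the paper's argument. The paper just remarks that all four items lead to the same moment matrix, and your shift $k\mapsto k-1$, $p\mapsto p-1$ is exactly the check that the $\Phi^*_{\bm{n},\bm{m}}$ system has the same matrix $\bigl(\int w^{k-p}\,d\mu_j(w)\bigr)$ as the $\Phi_{\bm{n},\bm{m}}$ system. You were right to drop the $\sharp$ detour, since that map relates $(\bm{n},\bm{m})$ to $(\bm{m},\bm{n})$ rather than to $\Phi^*_{\bm{n},\bm{m}}$ itself.
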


\begin{proof}
    Similarly to Proposition \ref{prop:phi normality criteria}, these items are associated with the same matrix. 
\end{proof}

We also have the following proposition, relating $\Phi_{\bm{n},\bm{m}}^*$ and $\Phi_{\bm{n},\bm{m}}^\sharp$. 

\begin{prop}\label{prop:n,m normality vs m,n normality}
    $(\bm{n},\bm{m})$ is normal if and only if $(\bm{m},\bm{n})$ is normal, and 
    \begin{equation}
        \Phi_{\bm{n},\bm{m}}^\sharp = \Phi_{\bm{m},\bm{n}}^*.
    \end{equation}
\end{prop}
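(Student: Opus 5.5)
The plan is to show that the map $f \mapsto f^\sharp$ is a bijection between the solution spaces for the two problems, and that it carries the normalization of one onto the normalization of the other.

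First, observe that $\sharp$ maps $\operatorname{span}\{z^p\}_{p=-\abs{\bm{m}}}^{\abs{\bm{n}}}$ onto $\operatorname{span}\{z^p\}_{p=-\abs{\bm{n}}}^{\abs{\bm{m}}}$. Indeed, if $f(z)=\sum_k c_k z^k$, then $f^\sharp(z)=\sum_k \bar c_{-k} z^k$. The map is conjugate-linear and involutive.

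Next, translate the orthogonality relations. On $\d\D$ we have $1/\bar w = w$, so $f^\sharp(w)=\overline{f(w)}$ for $|w|=1$. The measures $\mu_j$ are positive, so for every integer $p$
\begin{equation*}
\int f^\sharp(w)\,w^{-p}\,d\mu_j(w) = \overline{\int f(w)\,\bar w^{-p}\,d\mu_j(w)} = \overline{\int f(w)\,w^{p}\,d\mu_j(w)}.
\end{equation*}
Hence $f$ satisfies \eqref{eq:generalized hermite pade orthogonality intro} for $(\bm{n},\bm{m})$, that is $\int f w^{-q}d\mu_j=0$ for $q=-m_j,\dots,n_j-1$, if and only if $\int f^\sharp w^{-p}d\mu_j=0$ for $p=-n_j+1,\dots,m_j$. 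This is exactly \eqref{eq:generalized hermite pade orthogonality star} for the pair $(\bm{m},\bm{n})$. Only integer powers occur here, so no square-root branch issues arise.

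Now compare the spaces and normalizations. By the fourth item of Proposition~\ref{prop:normality criteria hp}, $(\bm{m},\bm{n})$ is normal if and only if there is no nonzero $g\in\operatorname{span}\{z^p\}_{p=-\abs{\bm{n}}+1}^{\abs{\bm{m}}}$ satisfying \eqref{eq:generalized hermite pade orthogonality star} for $(\bm{m},\bm{n})$. Under $\sharp$ this space corresponds exactly to $\operatorname{span}\{z^p\}_{p=-\abs{\bm{m}}}^{\abs{\bm{n}}-1}$. Combined with the previous step, this condition is equivalent to the second item of Proposition~\ref{prop:normality criteria hp} for $(\bm{n},\bm{m})$, i.e.\ to normality of $(\bm{n},\bm{m})$. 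This proves the first claim.

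For the identity, take the normal case. $\Phi_{\bm{n},\bm{m}}$ has the form $z^{\abs{\bm{n}}}+\dots+\alpha z^{-\abs{\bm{m}}}$, so $\Phi_{\bm{n},\bm{m}}^\sharp=\bar\alpha z^{\abs{\bm{m}}}+\dots+z^{-\abs{\bm{n}}}$. This matches the normalization \eqref{eq:moprlIIspanStar} for the pair $(\bm{m},\bm{n})$. By the translation of the orthogonality relations, it also satisfies \eqref{eq:generalized hermite pade orthogonality star} for $(\bm{m},\bm{n})$. Uniqueness, from the third item of Proposition~\ref{prop:normality criteria hp} applied to $(\bm{m},\bm{n})$, then gives $\Phi_{\bm{n},\bm{m}}^\sharp=\Phi_{\bm{m},\bm{n}}^*$.

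The only real care needed is the index bookkeeping, checking that the ranges $-m_j,\dots,n_j-1$ and $-n_j+1,\dots,m_j$ correspond under $q=-p$. Everything else is routine.
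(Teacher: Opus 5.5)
Your proof is correct and takes the same route as the paper: check that $\sharp$ carries the orthogonality relations and normalization of $\Phi_{\bm{n},\bm{m}}$ onto those of $\Phi^*_{\bm{m},\bm{n}}$, then invoke Proposition~\ref{prop:normality criteria hp}. You have written out the index bookkeeping that the paper leaves to the reader, including the implicit use of the positivity of the $\mu_j$ when conjugating the integrals.
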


\begin{proof}
    Simply check the orthogonality relations of $\Phi_{\bm{n},\bm{m}}^\sharp$ and $\Phi_{\bm{m},\bm{n}}^*$ and apply Proposition \ref{prop:normality criteria hp}.
\end{proof}

\section{Multiple Paraorthogonal Polynomials}\label{multiple popuc section}

The results of the paper will be centered around the following definition. 

\begin{defn}\label{def:para}
For any $\phi$-normal index $\bm{n}\in\N^r$ and any $\tau\in\T$, the type II paraorthogonal polynomial $X_{\bm{n}}^{(\tau)}$ is defined by
\begin{equation}\label{eq:paraorthogonal polynomials defn}
    % X_{\bm{n}}^{(\tau)}(z) = \tau^{1/2} z^{1/2}\phi_{\bm{n}}(z) + \tau^{-1/2} z^{-1/2}\phi_{\bm{n}}^\sharp(z).
    X_{\bm{n}}^{(\tau)}(z) = z^{1/2}\phi_{\bm{n}}(z) + \tau z^{-1/2}\phi_{\bm{n}}^\sharp(z).
\end{equation}
\end{defn}
\begin{rem}
    Note that if $r=1$ then 
\begin{equation}
    z^{(n+1)/2} X_{n}^{(\tau)}(z) =   z\Phi_{n}(z) + \tau \Phi_{n}^*(z),
\end{equation} 
which is the usual paraorthogonal polynomials $\Phi_{n+1}^{(\tau)}$ from the classical theory.
\end{rem}

%Note that if $n=1$ then this becomes $X_{\bm{n}}^{(\tau)}(z) =  \tau^{1/2} z^{-n/2+1/2}\Phi_{\bm{n}}(z) + \tau^{-1/2} z^{-n/2-1/2}\Phi_{\bm{n}}^*(z)$, i.e., $ z^{(n+1)/2} X_{\bm{n}}^{(\tau)}(z) =  \tau^{1/2} z\Phi_{\bm{n}}(z) + \tau^{-1/2} \Phi_{\bm{n}}^*(z) = \tau^{1/2} (z\Phi_n(z) + \bar{\tau} \Phi^*_{n}(z))$, which is effectively the usual definition of $\Phi_{n+1}^{(\tau)}$.

%Note that if $n=1$ then this becomes $X_{\bm{n}}^{(\tau)}(z) =  z^{-n/2+1/2}\Phi_{\bm{n}}(z) + \tau z^{-n/2-1/2}\Phi_{\bm{n}}^*(z)$, i.e., $ z^{(n+1)/2} X_{\bm{n}}^{(\tau)}(z) =   z\Phi_{\bm{n}}(z) + \tau \Phi_{\bm{n}}^*(z)$, which is effectively the usual definition of $\Phi_{n+1}^{(\tau)}$.

By \eqref{eq:paraorthogonal polynomials defn}, we have $X_{\bm{n}}^{(\tau)}(z) \in \operatorname{span}\set{z^p}_{p = -(\abs{\bm{n}}+1)/2}^{(\abs{\bm{n}}+1)/2}$ and the symmetric orthogonality relations
\begin{equation}\label{eq:orthoPOPUC}
    \int X_{\bm{n}}^{(\tau)}(z)z^{-p}d\mu_j(z) = 0, \quad p = -(n_j-1)/2,\dots,(n_j-1)/2.
\end{equation}
We also have
\begin{equation}\label{eq:beta invariance}
    X_{\bm{n}}^{(\tau)}(z) = \tau\overline{X_{\bm{n}}^{(\tau)}(1/\bar{z})},
\end{equation}
which we will refer to as the $\tau$-invariance of $X_{\bm{n}}^{(\tau)}$. %\red{Note that \eqref{eq:orthoPOPUC}--\eqref{eq:beta invariance} is always satisfied by
%\begin{equation}
%    az^{1/2}\phi_{\bm{n}}(z) + \bar{a}\tau z^{-1/2}\phi_{\bm{n}}^\sharp(z), \quad a \in \C.
%\end{equation}
%}

\begin{prop}\label{prop:popuc normality}
    $\bm{n}$ is $\phi$-normal if and only if there is no non-trivial $\tau$-invariant $X(z) \in \operatorname{span}\set{z^p}_{p = -(\abs{\bm{n}}-1)/2}^{(\abs{\bm{n}}-1)/2}$ satisfying \eqref{eq:orthoPOPUC}. 
\end{prop}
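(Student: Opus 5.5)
The plan is to pass between $X$ and a Laurent polynomial of the type in Proposition~\ref{prop:phi normality criteria} by multiplying by $z^{\pm1/2}$. Write $N=\abs{\bm{n}}$. Since $z^{k/2}=(z^{1/2})^k$ for our fixed branch~\eqref{eq:sqrt}, we have $z^{1/2}z^{k/2}=z^{(k+1)/2}$ for every integer $k$. Hence multiplication by $z^{-1/2}$ maps $\operatorname{span}\set{z^p}_{p=-(N-1)/2}^{(N-1)/2}$ bijectively onto $\operatorname{span}\set{z^p}_{p=-N/2}^{N/2-1}$. Moreover, if $X=z^{1/2}Y$, then
\begin{equation*}
\int X z^{-p}\,d\mu_j=\int Y z^{-(p-1/2)}\,d\mu_j ,
\end{equation*}
and as $p$ runs over $-(n_j-1)/2,\dots,(n_j-1)/2$, the shifted index $q=p-1/2$ runs over exactly $-n_j/2,\dots,n_j/2-1$. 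So~\eqref{eq:orthoPOPUC} for $X$ is equivalent to~\eqref{eq:type II def mod} for $Y$. The same computation with $z^{+1/2}$ shows that~\eqref{eq:orthoPOPUC} for $X=z^{-1/2}Z$ is equivalent to~\eqref{eq:type II def mod sharp} for $Z$.

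($\Rightarrow$) Suppose $\bm{n}$ is $\phi$-normal and $X$ is in the given span and satisfies~\eqref{eq:orthoPOPUC}. Then $Y=z^{-1/2}X$ lies in $\operatorname{span}\set{z^p}_{p=-N/2}^{N/2-1}$ and satisfies~\eqref{eq:type II def mod}. Its coefficients therefore solve the homogeneous system with matrix $T_{\bm{n}}$. Since $\det T_{\bm{n}}\neq 0$ by Proposition~\ref{prop:phi normality criteria}, we get $Y=0$ and hence $X=0$. This direction does not even use $\tau$-invariance.

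($\Leftarrow$) Suppose $\bm{n}$ is not $\phi$-normal. Then $\det T_{\bm{n}}=0$, which gives a nonzero $Y\in\operatorname{span}\set{z^p}_{p=-N/2}^{N/2-1}$ satisfying~\eqref{eq:type II def mod}.
\begin{itemize}
\item Conjugating these relations on $\T$, using $z^{-k/2}=\overline{z^{k/2}}$ there, shows that $Y^\sharp\in\operatorname{span}\set{z^p}_{p=-N/2+1}^{N/2}$ satisfies~\eqref{eq:type II def mod sharp}.
\item For $a\in\C$, set $X_a=a z^{1/2}Y+\bar a\tau z^{-1/2}Y^\sharp$. By the exponent count above, both terms lie in $\operatorname{span}\set{z^p}_{p=-(N-1)/2}^{(N-1)/2}$, and both satisfy~\eqref{eq:orthoPOPUC}.
\item To check $\tau$-invariance, compute $X_a^\sharp=\bar a z^{-1/2}Y^\sharp+a\bar\tau z^{1/2}Y$. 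Multiplying by $\tau$ and using $|\tau|=1$ gives $\tau X_a^\sharp=X_a$, as in~\eqref{eq:beta invariance}.
\item It remains to get a nontrivial $X_a$. If $X_1=0$ and $X_i=0$, then $X_1-iX_i=2z^{1/2}Y=0$, which contradicts $Y\neq0$. So one of $X_1,X_i$ is the required nontrivial $\tau$-invariant function.
\end{itemize}

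The main point of care is the branch bookkeeping. I will check the identities $z^{1/2}z^{k/2}=z^{(k+1)/2}$ and $\left(z^{k/2}\right)^\sharp=z^{-k/2}$ directly from~\eqref{eq:sqrt}, since the Remark warns that naive manipulations of half-integer powers can fail.
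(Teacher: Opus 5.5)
Your proof is correct and follows the paper's argument. The paper likewise identifies the system \eqref{eq:orthoPOPUC} on the given span with $T_{\bm{n}}$ via $\phi = z^{-1/2}X$, then symmetrizes a nonzero solution as $X+\tau X^\sharp$ and falls back to $iX$ when this vanishes, which is exactly your $X_1$/$X_i$ dichotomy; your explicit branch checks ($z^{1/2}z^{k/2}=z^{(k+1)/2}$ and $(z^{k/2})^\sharp=z^{-k/2}$) make precise what the paper leaves implicit.
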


\begin{proof}
    The matrix of \eqref{eq:orthoPOPUC} for $X(z) \in \operatorname{span}\set{z^p}_{p = -(\abs{\bm{n}}-1)/2}^{(\abs{\bm{n}}-1)/2}$ is exactly equal to $T_{\bm{n}}$. This is immediate from the orthogonality relations and degree restriction of $\phi = z^{-1/2}X(z)$. We can now build the $\tau$-invariant $X + \tau X^\sharp$. If $X + \tau X^\sharp \not\equiv 0$ we are done. Otherwise, we have $X = -\tau X^\sharp$, in which case $iX$ is $\tau$-invariant. 
\end{proof}

%Note that $X_{\bm{n}}^{(\tau)}(z) = \tau\overline{X_{\bm{n}}^{(\tau)}(1/\bar{z})}$. From Proposition \ref{prop:popuc normality} it follows that $\bm{n}$ is $\phi$-normal if and only if for every $\tau \in \d\D$ there is a unique polynomial $X_{\bm{n}}^{(\tau)}$ up to multiplication by a real constant, with $X_{\bm{n}}^{(\tau)}(z) = \tau\overline{X_{\bm{n}}^{(\tau)}(1/\bar{z})}$ and $\deg{z^{(\abs{n}+1)/2}X_{\bm{n}}^{(\tau)}(z)} = \abs{\bm{n}}+1$ such that \eqref{eq:orthoPOPUC} holds (in which case $X_{\bm{n}}^{(\tau)}$ is given by real multiples of \eqref{eq:paraorthogonal polynomials defn}).

As an alternative to Definition \ref{def:para}, we can work with the function
\begin{equation}
    T_{\bm{n}}^{(\tau)}(\theta)
    =\tfrac12 \tau^{-1/2}X_{\bm{n}}^{(\tau)}(e^{i\theta})
    =
    \tfrac12\tau^{-1/2} z^{1/2}\phi_{\bm{n}}(z) + \tfrac12\tau^{1/2} z^{-1/2}\phi_{\bm{n}}^\sharp(z)
    .
\end{equation}
Note that $\tau^{-1/2}X_{\bm{n}}^{(\tau)}(z)$ is self-reciprocal since $\tau\in\T$, so $T_{\bm{n}}^{(\tau)}(\theta)$ is a real trigonometric polynomial
\begin{equation}\label{eq:trigPOPUC}
    T_{\bm{n}}^{(\tau)}(\theta)
    =
    \begin{cases}
         %\sum_{j=0}^{|\bm{n}|/2}  \left( a_j \cos\left(j+\tfrac12\right) \theta +b_j\sin\left(j+\tfrac12\right)\theta \right),   & \mbox{if } |\bm{n}| \mbox{ is even},
         \sum_{j=1/2}^{(|\bm{n}|+1)/2}  \left( a_j \cos j\theta +b_j\sin j\theta \right)
         & \mbox{if } |\bm{n}| \mbox{ is even},
         \\
          a_0+ \sum_{j=1}^{(|\bm{n}|+1)/2} \left( a_j \cos j\theta +b_j\sin j\theta \right),  & \mbox{if } |\bm{n}| \mbox{ is odd},
    \end{cases}
\end{equation}
with
\begin{equation}\label{eq:trigPOPUC2}
    a_{(|\bm{n}|+1)/2} = \re(\tau^{1/2})
    ,
    \quad
    b_{(|\bm{n}|+1)/2} =\im(\tau^{1/2}).
\end{equation}
%where $\theta\in [t_0,t_0+2\pi)$.

%Note that if $\tau=1$ then, up to a multiplication by $-1$, we have $a_{(|\bm{n}|+1)/2} = 1$ and $b_{(|\bm{n}|+1)/2} = 0$, so that $T_{\bm{n}}^{(\tau)}(\theta)$ has only cosine as its highest term, and if $\tau=-1$ then, up to a multiplication by $-1$,  $a_{(|\bm{n}|+1)/2} = 0$, $b_{(|\bm{n}|+1)/2} = 1$, so that $T_{\bm{n}}^{(\tau)}(\theta)$ has only sine as its highest term.

%If we denote the right-hand side of~\eqref{eq:trigPOPUC} by $T_{\bm{n}}^{(\tau)}(\theta)$, then
The orthogonality conditions~\eqref{eq:orthoPOPUC} can be rewritten in the trigonometric form
\begin{align}\label{eq:orthoTrigPOPUC}
    \int_{t_0}^{t_0+2\pi} T_{\bm{n}}^{(\tau)}(\theta) \cos p\theta \, d\mu_j(e^{i\theta}) = 0, \quad 
    p = -(n_j-1)/2,\dots,(n_j-1)/2,\\
    \label{eq:orthoTrigPOPUC2}
    \int_{t_0}^{t_0+2\pi} T_{\bm{n}}^{(\tau)}(\theta) \sin p\theta \, d\mu_j(e^{i\theta}) = 0, \quad p = -(n_j-1)/2,\dots,(n_j-1)/2,
\end{align}
Such multiple orthogonality for trigonometric polynomials has been studied in~\cite{Mil}, in the case of all even $n_j$.

%\section{Paraorthogonal polynomials, type I}

%\section{Location of zeros, type II}

\section{Zeros of Multiple Paraorthogonal Polynomials}\label{zeros of multiple popuc section}

%I checked this section, it seems right. 

%But I think phrasing everything in terms of $T_{\bm{n}}$, rather than $X_{\bm{n}}$ is cleaner. You can just go through our ~\cite{KVNikInt} paper and get straightforward analogues. No need to worry about square roots (apart from the previous section). For Angelesco we can use our approach \cite[Prop. 4.6--Cor.4.7]{KVNikInt} which I think is nicer (and of independent interest, unless already known), than counting zeros. The remaining proofs (type I and interlacing) will probably also be nicer via trigonometric approach. Then in the end we state corollaries in terms of zeros of $\phi$'s.

In this section we discuss zeros of the polynomials $z^{(\abs{\bm{n}}+1)/2}X_{\bm{n}}^{(\tau)}(z)$. With a slight abuse of terminology we will refer to them as the zeros of $X_{\bm{n}}^{(\tau)}$.
Here we write $X_{\bm{n}}^{(\tau)}(z)$ for any $\tau$-invariant function in $\operatorname{span}\set{z^p}_{p = -(\abs{\bm{n}}+1)/2}^{(\abs{\bm{n}}+1)/2}$ satisfying \eqref{eq:orthoPOPUC}. Theorem \ref{thm:zeros outside circle} and Theorem \ref{thm:simple zeros on circle} are analogue to results on the real line from \cite{KVNikInt,KV1}. The first result gives a criterion for no zeros to lie outside the unit circle.

\begin{thm}\label{thm:zeros outside circle}
    Assume $\bm{n}$ is $\phi$-normal for $\bm{\mu}$. Then $X_{\bm{n}}^{(\tau)}$ has all $|\bm{n}|+1$ zeros on the unit circle if $\bm{n}$ is $\phi$-normal with respect to the systems %If $\abs{z_0} \neq 1$, then $z_0 = r_0e^{i\theta_0}$ is a zero of $X_{\bm{n}}^{(\tau)}$ if and only if $\bm{n}$ is not normal with respect to 
    \begin{equation}
        d\widehat{\bm{\mu}}(z) = \abs{z-z_0}^2d\bm{\mu}(z), \quad 0 \neq \abs{z_0} \neq 1.%= (1 + r_0^2 - 2r_0\cos(\theta - \theta_0))d\bm{\mu}(e^{i\theta}), \quad z_0 = r_0e^{i\theta_0}, \quad 0 \neq r_0 \neq 1.
    \end{equation}
\end{thm}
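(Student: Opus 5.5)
The plan is to argue by contradiction: a zero off the circle lets us factor out $|z-z_0|^2$ and produce a forbidden function for the modified system, using Proposition~\ref{prop:popuc normality} for both $\bm{\mu}$ and $\widehat{\bm{\mu}}$.

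\emph{Step 1: exact degree and no zero at the origin.} Write $P(z)=z^{(|\bm{n}|+1)/2}X_{\bm{n}}^{(\tau)}(z)$, where $X_{\bm{n}}^{(\tau)}$ is any nontrivial $\tau$-invariant function as in the statement. By $\tau$-invariance \eqref{eq:beta invariance}, the coefficient of $z^{-(|\bm{n}|+1)/2}$ in $X_{\bm{n}}^{(\tau)}$ equals $\tau$ times the conjugate of the coefficient of $z^{(|\bm{n}|+1)/2}$. Suppose these extreme coefficients vanish. Then $X_{\bm{n}}^{(\tau)}$ is a nontrivial $\tau$-invariant element of $\operatorname{span}\{z^p\}_{p=-(|\bm{n}|-1)/2}^{(|\bm{n}|-1)/2}$ satisfying \eqref{eq:orthoPOPUC}. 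Proposition~\ref{prop:popuc normality} then contradicts the $\phi$-normality of $\bm{n}$ for $\bm{\mu}$. Hence $P$ has exact degree $|\bm{n}|+1$ and $P(0)\neq 0$. So $P$ has exactly $|\bm{n}|+1$ zeros counted with multiplicity, and none of them is $0$.

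\emph{Step 2: symmetry of zeros.} From \eqref{eq:beta invariance}, if $X_{\bm{n}}^{(\tau)}(z_0)=0$ then $X_{\bm{n}}^{(\tau)}(1/\bar z_0)=0$. Suppose $z_0\neq 0$ is a zero with $|z_0|\neq 1$. Then $z_0$ and $1/\bar z_0$ are distinct zeros of $P$, so $P(z)=(z-z_0)(z-1/\bar z_0)Q(z)$ with $\deg Q=|\bm{n}|-1$. Set $q(z)=(z-z_0)(z^{-1}-\bar z_0)$ and define
\[
Y(z)=X_{\bm{n}}^{(\tau)}(z)/q(z).
\]
Since $q(z) = -\bar z_0 z^{-1}(z-z_0)(z-1/\bar z_0)$, we get $Y(z) = -z^{-(|\bm{n}|-1)/2}Q(z)/\bar z_0$. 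Hence $Y$ is a nontrivial element of $\operatorname{span}\{z^p\}_{p=-(|\bm{n}|-1)/2}^{(|\bm{n}|-1)/2}$.

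\emph{Step 3: $Y$ is forbidden for $\widehat{\bm{\mu}}$.} First, $q$ is self-reciprocal: $q^\sharp(z)=(z^{-1}-\bar z_0)(z-z_0)=q(z)$. Combined with the $\tau$-invariance of $X_{\bm{n}}^{(\tau)}$, this makes $Y$ $\tau$-invariant. Second, on $\T$ we have $q(z)=(z-z_0)(\bar z-\bar z_0)=|z-z_0|^2$. Therefore
\[
\int Y(z)z^{-p}\,d\widehat\mu_j(z)=\int X_{\bm{n}}^{(\tau)}(z)z^{-p}\,d\mu_j(z)=0
\]
for $p=-(n_j-1)/2,\dots,(n_j-1)/2$, by \eqref{eq:orthoPOPUC}. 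So $Y$ is a nontrivial $\tau$-invariant function in the span of Proposition~\ref{prop:popuc normality} satisfying \eqref{eq:orthoPOPUC} for $\widehat{\bm{\mu}}$. Note that $\widehat{\bm{\mu}}$ still consists of positive measures with infinite support, since $z_0\notin\T$. Proposition~\ref{prop:popuc normality} then says $\bm{n}$ is not $\phi$-normal for $\widehat{\bm{\mu}}$, contradicting the hypothesis.

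Hence every zero of $P$ lies on $\T$, and there are exactly $|\bm{n}|+1$ of them. The only delicate point is the bookkeeping in Step~2: one must check that the half-integer powers combine so that $Y$ lands in exactly the span $\{z^p\}_{p=-(|\bm{n}|-1)/2}^{(|\bm{n}|-1)/2}$ with the fixed branch of the square root. This holds because $q$ involves only integer powers of $z$.
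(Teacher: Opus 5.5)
Your proof is correct and follows the paper's argument essentially step for step. You first rule out a zero at the origin via $\tau$-invariance and Proposition~\ref{prop:popuc normality}; you also make the exact degree $|\bm{n}|+1$ explicit, which the paper leaves implicit. You then factor $z^{-1}(z-z_0)(z-1/\bar z_0)=-\bar z_0^{-1}|z-z_0|^2$ out of $X_{\bm{n}}^{(\tau)}$ to get a nontrivial $Y$ in the smaller span that is orthogonal with respect to $\widehat{\bm{\mu}}$. The only cosmetic difference is at the end: you check that $Y$ is $\tau$-invariant and apply Proposition~\ref{prop:popuc normality} to $\widehat{\bm{\mu}}$, whereas the paper passes to $z^{-1/2}Y$ and uses the $\phi$-normality criterion directly.
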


\begin{proof}
    First, $z^{(\abs{\bm{n}}+1)/2}X_{\bm{n}}^{(\tau)}(z)$ cannot vanish at $0$, since we would then have $X_{\bm{n}}^{(\tau)} \in \operatorname{span}\set{z^p}_{p = -(\abs{\bm{n}}-1)/2}^{(\abs{\bm{n}}-1)/2}$ by $\tau$-invariance. This contradicts $\phi$-normality of $\bm{n}$ for $\bm{\mu}$, by Proposition \ref{prop:popuc normality}.
    
    Now suppose $X_{\bm{n}}^{(\tau)}(z_0) = 0$ and $0 \neq \abs{z_0} \neq 1$. Then also $X_{\bm{n}}^{(\tau)}(1/\bar{z}_0) = 0$. Hence we can pull out the factor $z^{-1}(z-z_0)(z-1/\bar{z}_0)$ from $X_{\bm{n}}^{(\tau)}$. Note that 
\begin{align}\label{eq:factorization 1}
    z^{-1}(z-z_0)(z-1/\bar{z}_0) = -\bar{z}_0^{-1}\abs{z-z_{0}}^2, \quad z \in \d\D.
\end{align}
We write $X_{\bm{n}}^{(\tau)}(z) = \abs{z-z_{0}}^2Y(z)$ for $z \in \d\D$, which defines a Laurent polynomial $Y(z) \in \operatorname{span}\set{z^p}_{p = -(\abs{\bm{n}}-1)/2}^{(\abs{\bm{n}}-1)/2}$. Then $z^{-1/2}Y(z)$ satisfies all the orthogonality relations with respect to $\widetilde{\bm{\mu}}$, but the degree restriction contradicts $\phi$-normality since $Y(z) \not\equiv 0$.

%Conversely, suppose $\bm{n}$ is not $\phi$-normal with respect to $\bm{\widetilde{\mu}} = \abs{z-z_{0}}^2\bm{\mu}$ and $0 \neq \abs{z_0} \neq 1$. Then there is some $\widetilde{X}_{\bm{n}}^{(\tau)}(z) \in \operatorname{span}\set{z^p}_{p = -(\abs{\bm{n}}-1)/2}^{(\abs{\bm{n}}-1)/2}$ with respect to $\bm{\widetilde{\mu}}$, by Proposition \ref{prop:popuc normality}. Now put%multiple orthogonal polynomial $\widetilde{\phi}_{\bm{n}}(z) \in \operatorname{span}\set{z^p}_{p = -\abs{\bm{n}}/2}^{\abs{\bm{n}}/2-1}$ with respect to $\bm{\widetilde{\mu}}$. Now put
%\begin{equation}
%    X(z) = e^{-i\theta_0}z^{-1}(z-z_0)(z-1/\bar{z}_0)(z^{1/2}\widetilde{\phi}_{\bm{n}}(z) + \tau z^{-1/2}\overline{\widetilde{\phi}_{\bm{n}}(1/\bar{z})}). 
%\end{equation}
%\begin{equation}
%    X(z) = e^{-i\theta_0}z^{-1}(z-z_0)(z-1/\bar{z}_0)\widetilde{X}_{\bm{n}}^{(\tau)}(z). 
%\end{equation}
%Note that $X(z) = \tau\overline{X(1/\bar{z})}$. From the orthogonality relations of $\widetilde{X}_{\bm{n}}^{(\tau)}$, it then follows that $X_{\bm{n}}^{(\tau)}$ is a non-zero real multiple of $X$. Hence $X_{\bm{n}}^{(\tau)}(z_0) = 0$. 
\end{proof}

The next two results concern the behaviour of zeros that lie on the circle. 

\begin{thm}\label{thm:simple zeros on circle}
    Assume $\bm{n}$ is $\phi$-normal for $\bm{\mu}$. Then  any zero of $X_{\bm{n}}^{(\tau)}$ on $\d\D$ is simple if $\bm{n}$ is $\phi$-normal with respect to the systems 
    \begin{equation}
        d\widehat{\bm{\mu}}(z) = -e^{i\varphi}z^{-1}(z-e^{i\varphi})^2d\bm{\mu}(z) = 4\sin^2\frac{\theta-\varphi}{2}d\bm{\mu}(e^{i\theta}), \quad \varphi \in [0,2\pi).
    \end{equation}
\end{thm}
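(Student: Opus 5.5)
The plan mirrors the proof of Theorem \ref{thm:zeros outside circle}: a double zero on $\d\D$ will be factored out as a nonnegative weight, leaving a nonzero Laurent polynomial of too low a degree that is multiply orthogonal with respect to the modified system $\widehat{\bm{\mu}}$. This contradicts $\phi$-normality of $\bm{n}$ for $\widehat{\bm{\mu}}$ via Proposition \ref{prop:popuc normality}, or equivalently Proposition \ref{prop:phi normality criteria}.

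Suppose $z_0 = e^{i\varphi}$ is a zero of $z^{(\abs{\bm{n}}+1)/2}X_{\bm{n}}^{(\tau)}(z)$ of multiplicity at least $2$. Since $z=0$ is not a zero (as shown at the start of the proof of Theorem \ref{thm:zeros outside circle}, using $\phi$-normality for $\bm{\mu}$), we can divide by $(z-e^{i\varphi})^2$ and write
\begin{equation*}
X_{\bm{n}}^{(\tau)}(z) = -e^{i\varphi}z^{-1}(z-e^{i\varphi})^2\, Y(z),
\end{equation*}
with $Y(z)\in\operatorname{span}\set{z^p}_{p=-(\abs{\bm{n}}-1)/2}^{(\abs{\bm{n}}+1)/2 - 2 + 1}$. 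A direct exponent count shows that the range is exactly $p=-(\abs{\bm{n}}-1)/2,\dots,(\abs{\bm{n}}-1)/2$: the factor $z^{-1}(z-e^{i\varphi})^2$ spans exponents $-1,0,1$, so removing it lowers the top exponent by one and raises the bottom exponent by one. Moreover $Y\not\equiv 0$, because $X_{\bm{n}}^{(\tau)}\not\equiv0$ (its leading coefficient is $1$).

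Next I will check $\tau$-invariance of $Y$. The factor $q(z) = -e^{i\varphi}z^{-1}(z-e^{i\varphi})^2$ satisfies $q(z)=\overline{q(1/\bar z)}$; indeed on $\d\D$ it equals $4\sin^2\frac{\theta-\varphi}{2}\ge 0$, as stated. Since $X_{\bm{n}}^{(\tau)}$ is $\tau$-invariant, $Y$ is $\tau$-invariant as well. Finally, the relations \eqref{eq:orthoPOPUC} for $X_{\bm{n}}^{(\tau)}$ with respect to $\bm{\mu}$ become, after substituting $d\widehat{\mu}_j = q\,d\mu_j$,
\begin{equation*}
\int Y(z)z^{-p}\,d\widehat{\mu}_j(z)=0,\quad p=-(n_j-1)/2,\dots,(n_j-1)/2 .
\end{equation*}
Thus $Y$ is a nontrivial $\tau$-invariant element of $\operatorname{span}\set{z^p}_{p=-(\abs{\bm{n}}-1)/2}^{(\abs{\bm{n}}-1)/2}$ satisfying \eqref{eq:orthoPOPUC} for $\widehat{\bm{\mu}}$. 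By Proposition \ref{prop:popuc normality}, $\bm{n}$ is then not $\phi$-normal for $\widehat{\bm{\mu}}$, a contradiction.

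The only delicate point is bookkeeping with the half-integer exponents and the chosen square-root branch. We only multiply by the integer-exponent factor $q$, so the branch convention is preserved and no issue arises. If one prefers to avoid relying on $\tau$-invariance, the same contradiction follows directly from the matrix $T_{\bm{n}}$ for $\widehat{\bm{\mu}}$ applied to $z^{-1/2}Y$, as in the proof of Theorem \ref{thm:zeros outside circle}. One should also confirm that $\widehat{\bm{\mu}}$ still has infinite support, which holds since $q$ vanishes only at the single point $e^{i\varphi}$.
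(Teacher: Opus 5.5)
Your proof is correct and follows the paper's argument: you factor the double zero at $e^{i\varphi}$ out as the weight $4\sin^2\frac{\theta-\varphi}{2}$, leaving a nonzero $\tau$-invariant $Y\in\operatorname{span}\set{z^p}_{p=-(\abs{\bm{n}}-1)/2}^{(\abs{\bm{n}}-1)/2}$ that satisfies \eqref{eq:orthoPOPUC} for $\widehat{\bm{\mu}}$, which contradicts Proposition \ref{prop:popuc normality}. One slip, copied from the statement: the real, self-reciprocal factor is $-e^{-i\varphi}z^{-1}(z-e^{i\varphi})^2$, whereas $-e^{i\varphi}z^{-1}(z-e^{i\varphi})^2=4e^{2i\varphi}\sin^2\frac{\theta-\varphi}{2}$ on $\d\D$, so your identity $q(z)=\overline{q(1/\bar z)}$ is false as written; this is harmless, because multiplying every measure by the constant $e^{2i\varphi}$ only scales $\det T_{\bm{n}}$ by $e^{2i\varphi\abs{\bm{n}}}$ and so does not affect $\phi$-normality, and your fallback via $T_{\bm{n}}$ applied to $z^{-1/2}Y$ needs no invariance at all.
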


\begin{proof}
    First note that   \begin{equation}\label{eq:factorization 2}
    -ie^{-i\varphi/2}z^{-1/2}(z-e^{i\varphi}) = 2\sin\frac{\theta-\varphi}{2} \in \R, \quad z \in \d\D.
    \end{equation}
    Now,  $X_{\bm{n}}^{(\tau)}$ has a zero $e^{i\varphi}$ of higher multiplicity if and only if 
    \begin{equation}
        X_{\bm{n}}^{(\tau)}(z) = e^{i\varphi}z^{-1}(z-e^{i\varphi})^2Y(z), \quad Y(z) = \tau\overline{Y(1/\bar{z})} \in \operatorname{span}\set{z^p}_{p = -(\abs{\bm{n}}-1)/2}^{(\abs{\bm{n}}-1)/2}.
    \end{equation}
    The result then follows by Proposition \ref{prop:popuc normality} applied to $\widehat{\bm{\mu}}$.
\end{proof}

\begin{thm}\label{thm:two zeros the circle}
    Assume $\bm{n}$ is $\phi$-normal for $\bm{\mu}$. If $X_{\bm{n}}^{(\tau)}$ has two zeros $e^{i\varphi_1}$ and $e^{i\varphi_2} \neq e^{i\varphi_1}$ then $\bm{n}$ is not $\phi$-normal with respect to 
    \begin{multline}\label{eq:double christoffel transform on circle}
         d\widehat{\bm{\mu}}(z) = e^{-\varphi_1/2}z^{-1/2}(z-e^{i\varphi_1})e^{-\varphi_2/2}z^{-1/2}(z-e^{i\varphi_2})d\bm{\mu}(z) \\ = 4\sin\frac{\theta-\varphi_1}{2}\sin\frac{\theta-\varphi_2}{2}d\bm{\mu}(e^{i\theta}).
    \end{multline}
\end{thm}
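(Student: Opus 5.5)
Factor out both zeros and produce a nontrivial $\tau$-invariant Laurent polynomial of lower degree that satisfies the orthogonality relations for $\widehat{\bm{\mu}}$; Proposition \ref{prop:popuc normality} then gives non-normality. This is the same mechanism as in Theorems \ref{thm:zeros outside circle} and \ref{thm:simple zeros on circle}. The one new point is that each linear factor $z-e^{i\varphi_k}$ must be paired with $z^{-1/2}$ so that it becomes real on the circle via \eqref{eq:factorization 2}.

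\textbf{Step 1: factor $X_{\bm{n}}^{(\tau)}$.} Since $z^{(\abs{\bm{n}}+1)/2}X_{\bm{n}}^{(\tau)}(z)$ is a polynomial of degree at most $\abs{\bm{n}}+1$ vanishing at $e^{i\varphi_1}\neq e^{i\varphi_2}$, it is divisible by $(z-e^{i\varphi_1})(z-e^{i\varphi_2})$. Set
\[
q(z)=\prod_{k=1}^{2}\bigl(-ie^{-i\varphi_k/2}z^{-1/2}(z-e^{i\varphi_k})\bigr),
\]
which on $\d\D$ equals $4\sin\frac{\theta-\varphi_1}{2}\sin\frac{\theta-\varphi_2}{2}$ by \eqref{eq:factorization 2}. (The constants in \eqref{eq:double christoffel transform on circle} differ only by a nonzero factor, which does not affect normality.) Write $X_{\bm{n}}^{(\tau)}=qY$.

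\textbf{Step 2: degree of $Y$.} The $z^{-1/2}$ factors are handled with the fixed branch \eqref{eq:sqrt}, and $z^{-1/2}z^{-1/2}=z^{-1}$ holds for this branch. Hence $q(z)=c\,z^{-1}(z-e^{i\varphi_1})(z-e^{i\varphi_2})$ is an honest Laurent polynomial spanned by $z^{-1},1,z$. It follows that $Y\in\operatorname{span}\{z^p\}_{p=-(\abs{\bm{n}}-1)/2}^{(\abs{\bm{n}}-1)/2}$, and $Y\not\equiv0$.

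\textbf{Step 3: $\tau$-invariance of $Y$.} We have $q^\sharp=q$: indeed $q$ is real on $\d\D$, and a Laurent polynomial that is real on the circle is self-reciprocal. Since $X_{\bm{n}}^{(\tau)}=\tau (X_{\bm{n}}^{(\tau)})^\sharp$, this gives $qY=\tau q Y^\sharp$ on $\d\D$. Therefore $Y=\tau Y^\sharp$ on $\d\D$ minus two points, hence identically.

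\textbf{Step 4: orthogonality.} For each $j$ and $p=-(n_j-1)/2,\dots,(n_j-1)/2$,
\[
\int Y z^{-p}\,d\widehat\mu_j=\int X_{\bm{n}}^{(\tau)}z^{-p}\,d\mu_j=0,
\]
so $Y$ satisfies \eqref{eq:orthoPOPUC} with respect to $\widehat{\bm{\mu}}$. (If $\widehat{\bm{\mu}}$ is signed, the algebraic criterion of Proposition \ref{prop:popuc normality} via $T_{\bm{n}}$ still applies, since its proof is pure linear algebra.) Proposition \ref{prop:popuc normality} then shows that $\bm{n}$ is not $\phi$-normal for $\widehat{\bm{\mu}}$.

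The main obstacle is the branch bookkeeping in Steps 2–3: verifying that $q$ is a genuine Laurent polynomial with $q^\sharp=q$, despite the half-integer exponents and the non-multiplicativity noted in the Remark after \eqref{eq:sqrt}. Once this is checked, everything else is routine.
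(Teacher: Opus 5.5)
Your proof is correct and follows essentially the paper's route: the paper simply refers back to the proof of Theorem~\ref{thm:simple zeros on circle}, which factors out a quadratic factor that is real on the circle, obtains a nontrivial $\tau$-invariant $Y$ of lower degree that is orthogonal with respect to $\widehat{\bm\mu}$, and invokes Proposition~\ref{prop:popuc normality}. Your additional bookkeeping fills in the details the paper leaves implicit: the branch identity $z^{-1/2}z^{-1/2}=z^{-1}$, the identity $q^\sharp=q$, the harmless constant discrepancy in the displayed measure, and the observation that for a signed $\widehat{\bm\mu}$ only the linear-algebra direction of the proposition is needed.
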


\begin{proof}
    Same arguments as in the proof of Theorem \ref{thm:simple zeros on circle}.
\end{proof}

%\begin{cor}
%    All zeros of $X_{\bm{n}}^{(\tau)}$ are simple and lie on $\d\D$ and all zeros of $T_{\bm{n}}^{(\tau)}$ are real and simple in Angelesco and AT systems.
%\end{cor}

For Angelesco and AT systems, Theorem \ref{thm:zeros outside circle}-\ref{thm:two zeros the circle} gather into the following results. 

\begin{thm}\label{thm:at zeros}
    If $\bm{\mu}$ is an AT system, then all $|\bm{n}|+1$ zeros of $X_{\bm{n}}^{(\tau)}$ are simple and lie on the unit circle. At most one zero lies outside of $\operatorname{Int}(\Gamma)$.
\end{thm}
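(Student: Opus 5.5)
We plan to reduce everything to Theorems \ref{thm:zeros outside circle}--\ref{thm:two zeros the circle} combined with Theorem \ref{thm:AT}, by checking that each modified system $\widehat{\bm{\mu}}$ occurring there is again an AT system (for the relevant index). The exception is the last assertion, where the modified system should fail to be AT. Throughout, $\bm{\mu}$ is AT on $\Gamma=\{e^{i\theta}:\alpha\le\theta\le\beta\}$ with $t_0=\alpha$. In every case $d\widehat{\mu}_j = g(\theta)\,w_j(\theta)\,d\mu(e^{i\theta})$ with a common real factor $g$, so $\operatorname{Trig}_{\bm{n}}(\widehat{\bm{\mu}})$ is $\operatorname{Trig}_{\bm{n}}(\bm{\mu})$ with each $w_j$ replaced by $g w_j$.

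\emph{Zeros off the circle.} For $0\ne|z_0|\ne1$ the factor is $g(\theta)=|e^{i\theta}-z_0|^2$, which is continuous and strictly positive on $[\alpha,\beta]$. The Chebyshev determinant for the functions $\{g w_j \cdot(\text{trig})\}$ at points $x_1<\dots<x_N$ equals $\prod_k g(x_k)$ times the original determinant. Hence $\widehat{\bm\mu}$ is AT, every index is $\phi$-normal by Theorem \ref{thm:AT}, and Theorem \ref{thm:zeros outside circle} puts all $|\bm n|+1$ zeros on $\d\D$. (The extra normality hypothesis for $\bm\mu$ itself is also Theorem \ref{thm:AT}.)

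\emph{Simplicity.} Here $g(\theta)=4\sin^2\frac{\theta-\varphi}{2}\ge0$, which vanishes only at $\theta\equiv\varphi$, i.e. at most at one or two points of $[\alpha,\beta]$. The determinant is again $\prod g(x_k)$ times the original, so it is nonzero except when some $x_k$ hits a zero of $g$. I would handle this with the remark noted in the paper: a Chebyshev-type determinant that keeps a constant sign and vanishes only on a measure-zero set still gives normality. Concretely, I would revisit the proof of Theorem \ref{thm:AT}. In \cite{KVMLOPUC}, $\det T_{\bm n}$ is expressed as an integral of $W(\bm x)$ times a product measure over ordered $\bm x$, and with $g w_j$ the integrand has constant sign and is nonzero on a set of positive measure, because $\mu$ has infinite support and we only remove finitely many points. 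So $\widehat{\bm\mu}$ is $\phi$-normal, and Theorem \ref{thm:simple zeros on circle} gives simplicity. The main obstacle lies here: the argument needs the determinantal-integral representation in this weakened setting rather than the literal Chebyshev hypothesis. If that representation is unavailable, I would instead argue directly by zero counting. A nontrivial $\tau$-invariant $Y$ of degree $(|\bm n|-1)/2$ with the orthogonality relations against $\widehat{\bm\mu}$ gives a real trigonometric function $\sum_j w_j\cdot(\text{span of } \operatorname{Trig}_{n_j})$ that is orthogonal, against $g\,d\mu$, to suitable functions. The standard AT zero-counting then produces too many sign changes.

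\emph{At most one zero outside $\operatorname{Int}(\Gamma)$.} Suppose two distinct zeros $e^{i\varphi_1},e^{i\varphi_2}$ lie outside $\operatorname{Int}(\Gamma)$, that is, $\varphi_1,\varphi_2\notin(\alpha,\beta)$ mod $2\pi$. Then $g(\theta)=4\sin\frac{\theta-\varphi_1}{2}\sin\frac{\theta-\varphi_2}{2}$ does not change sign on $(\alpha,\beta)$. Since $\frac{\theta-\varphi_k}{2}$ ranges over an interval of length at most $\pi$ avoiding multiples of $\pi$ in its interior, each sine factor keeps one sign there. So $\pm g\ge0$ on $\Gamma$, vanishing at most at the endpoints. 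After multiplying by $-1$ if needed, the same argument as in the simplicity step shows that $\widehat{\bm\mu}$ is $\phi$-normal, contradicting Theorem \ref{thm:two zeros the circle}. Hence at most one zero lies outside $\operatorname{Int}(\Gamma)$.
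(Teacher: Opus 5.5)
Your proposal is correct and follows the paper's proof. Both show that the modified systems $|z-z_0|^2d\bm\mu$, $\sin^2\frac{\theta-\varphi}{2}d\bm\mu$ and $\bigl|\sin\frac{\theta-\varphi_1}{2}\sin\frac{\theta-\varphi_2}{2}\bigr|d\bm\mu$ are still AT up to an overall sign, so Theorem \ref{thm:AT} supplies the normality needed in Theorems \ref{thm:zeros outside circle}--\ref{thm:two zeros the circle}. Your extra care about the weight vanishing at a point of $\Gamma$ covers a step the paper dismisses with ``clearly holds''; your fix works because the integrand is nonzero on a set of positive $\mu^{\otimes N}$-measure, which (unlike your ``measure-zero set'' phrasing) still holds if $\mu$ has an atom at $e^{i\varphi}$.
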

\begin{proof}
    The AT property for $\abs{z-z_0}^2d\bm{\mu}$ and $\sin^2\frac{\theta-\varphi}{2}d\bm{\mu}(e^{i\theta})$ clearly holds, so all zeros are simple and unimodular. If $e^{i\varphi_1} \neq e^{i\varphi_2}$ are zeros outside $\operatorname{Int}(\Gamma)$, then $\sin\frac{\theta-\varphi_1}{2}\sin\frac{\theta-\varphi_2}{2}$ keeps the same sign on $\operatorname{Int}(\Gamma)$. Hence Theorem \ref{thm:two zeros the circle} applies since $\abs{\sin\frac{\theta-\varphi_1}{2}\sin\frac{\theta-\varphi_2}{2}}d\bm{\mu}(e^{i\theta})$ is AT with the same multiple orthogonal polynomials as \eqref{eq:double christoffel transform on circle}. 
\end{proof}

\begin{thm}\label{thm:angelesco zeros}
    If $\bm{\mu}$ is an Angelesco system, then all $|\bm{n}|+1$ zeros of $X_{\bm{n}}^{(\tau)}$ are simple and lie on the unit circle. At most one zero lies  outside $\bigcup_{j = 1}^r\operatorname{Int}(\Gamma_j)$.
\end{thm}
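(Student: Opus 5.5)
The plan is to mirror the proof of Theorem \ref{thm:at zeros}, replacing the AT normality result by the Angelesco one, Theorem \ref{thm:Angelesco}. The point is that the modifications in Theorems \ref{thm:zeros outside circle}--\ref{thm:two zeros the circle} multiply every $\mu_j$ by the \emph{same} nonnegative (or sign-definite) weight. Such a modification does not enlarge supports, so the modified system is still Angelesco with the same arcs $\Gamma_j$ and the same branch choice $t_0$.

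First, fix $z_0$ with $0\neq\abs{z_0}\neq1$. The weight $\abs{z-z_0}^2$ is strictly positive and continuous on $\T$. Hence $\abs{z-z_0}^2 d\bm{\mu}$ is a system of positive measures with infinite supports contained in $\Gamma_j$. By Theorem \ref{thm:Angelesco}, $\bm{n}$ is $\phi$-normal for it, and Theorem \ref{thm:zeros outside circle} puts all $\abs{\bm{n}}+1$ zeros on $\T$.

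Second, for $\varphi\in[0,2\pi)$, the weight $4\sin^2\frac{\theta-\varphi}{2}$ is nonnegative and vanishes only at $e^{i\varphi}$. The measures stay positive and keep infinite support, which is the only place the infinite-support hypothesis matters. They remain supported in $\Gamma_j$, so they form an Angelesco system. Theorem \ref{thm:simple zeros on circle} then gives simplicity. One must also check the extra requirement that $\mu_r$ has no mass at $e^{it_0}$. Multiplying by a bounded weight cannot create a point mass, so this condition is preserved.

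Third, suppose two distinct zeros $e^{i\varphi_1},e^{i\varphi_2}$ lie outside $\bigcup_j\operatorname{Int}(\Gamma_j)$. Then the two points cut $\T$ into two open arcs, and each $\operatorname{Int}(\Gamma_j)$, being connected and avoiding both points, lies in one of them. The function $\sin\frac{\theta-\varphi_1}{2}\sin\frac{\theta-\varphi_2}{2}$ has constant sign $\epsilon_j\in\{\pm1\}$ on $\operatorname{Int}(\Gamma_j)$, computed in the $\theta$-parametrization $[t_0,t_0+2\pi)$. Endpoint masses of $\mu_j$ at $e^{i\varphi_k}$ are killed by the weight, so this causes no trouble.

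Now set $d\widetilde\mu_j=\epsilon_j\cdot 4\sin\frac{\theta-\varphi_1}{2}\sin\frac{\theta-\varphi_2}{2}\,d\mu_j$. These are positive measures on $\Gamma_j$ with infinite support, so $\widetilde{\bm\mu}$ is Angelesco and $\bm{n}$ is $\phi$-normal for it by Theorem \ref{thm:Angelesco}. The orthogonality relations \eqref{eq:type II def mod} for each $j$ are homogeneous in $\mu_j$, so rescaling $\mu_j$ by $\epsilon_j$ does not change the solution space. Equivalently, $\det T_{\bm n}$ changes only by the factor $\prod_j\epsilon_j^{n_j}$. Hence $\bm{n}$ is $\phi$-normal for the system \eqref{eq:double christoffel transform on circle}, contradicting Theorem \ref{thm:two zeros the circle}.

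The main obstacle is this sign bookkeeping. In the AT case there is a single arc and one global sign; in the Angelesco case the sign may differ between arcs. The fix is precisely the per-measure sign flip, justified by the block structure of $T_{\bm{n}}$. A secondary subtlety is that the factor $z^{-1/2}$ in the weight depends on the branch. Since the paper's identity \eqref{eq:factorization 2} is stated on the parametrization $\theta\in[t_0,t_0+2\pi)$ and the arcs are ordered within that window, the sign $\epsilon_j$ is well defined on each $\Gamma_j$.
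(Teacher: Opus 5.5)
Your proposal is correct and is essentially the paper's argument. The paper proves this theorem by repeating the AT proof with Theorem~\ref{thm:Angelesco} in place of Theorem~\ref{thm:AT}. It checks that $\abs{z-z_0}^2d\bm{\mu}$ and $\sin^2\frac{\theta-\varphi}{2}\,d\bm{\mu}$ remain in the class and then invokes Theorem~\ref{thm:two zeros the circle}. Your per-arc signs $\epsilon_j$ are exactly what the paper encodes by passing to the weight $\abs{\sin\frac{\theta-\varphi_1}{2}\sin\frac{\theta-\varphi_2}{2}}$: on each $\Gamma_j$ it differs from the signed weight by a constant sign, so it has the same multiple orthogonal polynomials.
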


\begin{proof}
    Same as the proof of Theorem \ref{thm:at zeros}.
\end{proof}

\begin{rem}
    In this paper, we currently focus strictly on the zero location for type II Laurent multiple orthogonal polynomials. The analogous analysis for type I polynomials based on~\cite{KVNikInt} produces similar results. This is the subject of ongoing research by the authors and will be addressed in subsequent work.
    %In this paper, we focus strictly on the zero location and normality on the unit circle for type II Laurent multiple orthogonal polynomials. The analogous analysis for type I polynomials based on~\cite{KVNikInt} is the subject of ongoing research by the authors and will be addressed in subsequent work. 
\end{rem}

Theorem \ref{thm:angelesco zeros} can be improved, by the following result, based on the standard zero counting proof for multiple orthogonal polynomials on the real line (see e.g. \cite{Ismail}). 

\begin{thm}\label{thm:zero counting}
    If $\bm{n}$ is normal, then $X_{\bm{n}}^{(\tau)}$ has at least $n_j$ zeros of odd multiplicity in $\operatorname{Int}(\Gamma_j)$. 
\end{thm}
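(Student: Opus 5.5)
We argue by contradiction using the trigonometric form $T_{\bm{n}}^{(\tau)}(\theta)$ and the orthogonality relations \eqref{eq:orthoTrigPOPUC}--\eqref{eq:orthoTrigPOPUC2} with respect to $\mu_j$ alone, in the standard zero-counting style. Fix $j$ and suppose $X_{\bm{n}}^{(\tau)}$ has only $k<n_j$ zeros of odd multiplicity in $\operatorname{Int}(\Gamma_j)$, say $e^{i\varphi_1},\dots,e^{i\varphi_k}$. Since $\tau^{-1/2}X_{\bm{n}}^{(\tau)}$ is real on $\d\D$ and its real-valued restriction $T_{\bm{n}}^{(\tau)}$ is continuous on the arc, $T_{\bm{n}}^{(\tau)}(\theta)$ changes sign on $\operatorname{Int}(\Gamma_j)$ exactly at these $\varphi_i$ (zeros off the circle do not affect the sign on the arc).

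Build a test function of the form
\begin{equation*}
    S(\theta) = \prod_{i=1}^{k} 2\sin\frac{\theta-\varphi_i}{2}\cdot\Bigl(2\sin\frac{\theta-\psi}{2}\Bigr)^{\epsilon},
\end{equation*}
where $\epsilon\in\{0,1\}$ is chosen so that $k+\epsilon\equiv n_j-1 \pmod 2$, and $e^{i\psi}$ is a point outside $\operatorname{Int}(\Gamma_j)$, e.g.\ an endpoint of $\Gamma_j$ (or $e^{it_0}$, using that $\mu_r$ has no mass there). Note that $\sin\frac{\theta-\psi}{2}$ does not change sign for $\theta$ in the arc $\Gamma_j$ (with $\theta$ ranging in $[t_0,t_0+2\pi)$). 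By \eqref{eq:factorization 2}, $S$ equals, up to a unimodular constant, $z^{-(k+\epsilon)/2}\prod_i(z-e^{i\varphi_i})(z-e^{i\psi})^{\epsilon}$, so it lies in $\operatorname{span}\{z^p\}_{p=-(k+\epsilon)/2}^{(k+\epsilon)/2}$ and, being real, it is a real combination of $\cos p\theta,\sin p\theta$ with $|p|\le (k+\epsilon)/2\le (n_j-1)/2$, with $p$ in the same half-integer/integer class as $(n_j-1)/2$. Hence \eqref{eq:orthoTrigPOPUC}--\eqref{eq:orthoTrigPOPUC2} give $\int T_{\bm{n}}^{(\tau)}(\theta)S(\theta)\,d\mu_j(e^{i\theta})=0$.

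On the other hand, $T_{\bm{n}}^{(\tau)}S$ has constant sign on $\Gamma_j$, so the integral vanishing forces $T_{\bm{n}}^{(\tau)}S=0$ $\mu_j$-a.e. Since $\mu_j$ has infinite support while $T_{\bm{n}}^{(\tau)}S$ is a nonzero trigonometric polynomial with finitely many zeros, this is a contradiction. Nonvanishing of $T_{\bm{n}}^{(\tau)}$ follows from $\phi$-normality, by Proposition \ref{prop:popuc normality}, and in fact from \eqref{eq:trigPOPUC2}.

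The main obstacle is the square-root/parity bookkeeping. One must check that $S$ indeed lies in the correct half-integer span compatible with the branch \eqref{eq:sqrt} and with the index set $p=-(n_j-1)/2,\dots,(n_j-1)/2$, and that the extra factor with $e^{i\psi}$ really keeps constant sign on $\Gamma_j$. The latter uses the ordering convention for $t_0$ relative to the arcs: on $[t_0,t_0+2\pi)$ the half-angle sine vanishes only at $\psi$, which lies at or beyond the arc's endpoints.
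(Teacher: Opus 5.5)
Your proof is correct and follows the paper's argument. The paper also splits by the parity of $\ell$ versus $n_j$: it tests against the real product $L$ of the half-angle factors at the odd-multiplicity zeros when the parities differ, and when they agree it tests against $z^{1/2}L$ and takes the imaginary part of $e^{-it_0/2}X_{\bm n}^{(\tau)}z^{1/2}L$. Since $2\operatorname{Im}(e^{-it_0/2}z^{1/2})=2\sin\frac{\theta-t_0}{2}$, this second case is exactly your test function $S$ with $\epsilon=1$ and $\psi=t_0$, so your unified $(\epsilon,\psi)$ version in the real trigonometric form $T_{\bm n}^{(\tau)}$ is the same idea, just packaged more cleanly.
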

\begin{proof}
By \eqref{eq:factorization 1} and \eqref{eq:factorization 2} we have, up to multiplication by a constant, the factorization
\begin{equation}\label{eq:factorization structure}
    X_{\bm{n}}^{(\tau)}(z) = \prod_{j = 1}^k\abs{z-z_j}^2 \prod_{j = 1}^{\abs{\bm{n}}-2k+1} ie^{-i\varphi_j/2}z^{-1/2}(z-e^{i\varphi_j}), \quad z \in \d\D.
\end{equation}

Let $e^{i\theta_1},\dots,e^{i\theta_\ell}$ be the zeros of $X_{\bm{n}}^{(\tau)}$ in $\operatorname{Int}(\Gamma_j)$ of odd multiplicity. Then we can write $X_{\bm{n}}^{(\tau)}(z) = Y(z)L(z)$, where $Y(z)$ is real valued and does not change sign on $\Gamma_j$, and $L(z) = \prod_{j = 1}^{\ell}ie^{i\varphi_j/2}z^{-1/2}(z-e^{i\varphi_j})$. 

Assume first that $\ell$ and $n_j$ have different parity, so that if $\ell < n_j$ then $L(z) \in \operatorname{span}\set{z^p}_{p = -(n_j-1)/2}^{(n_j-1)/2}$. Since $X_{\bm{n}}^{(\tau)}(z)L(z)$ does not change sign on $\Gamma_k$ we have
\begin{equation}
    \int X_{\bm{n}}^{(\tau)}(z)L(z)d\mu_j(z) \neq 0.
\end{equation}
By the orthogonality relations \eqref{eq:orthoPOPUC}, we must have $\ell \geq n_j$. 

Now assume that $\ell$ and $n_j$ have the same parity. Then if $\ell < n_j$ we have $z^{1/2}L(z) \in \operatorname{span}\set{z^p}_{p = -(n_j-1)/2}^{(n_j-1)/2}$. Note that the imaginary part of $e^{-it_0/2}X_{\bm{n}}^{(\tau)}(z)z^{1/2}L(z)$ does not change sign on $\Gamma_j$. Hence we have
\begin{equation}
    \int X_{\bm{n}}^{(\tau)}(z)z^{1/2}L(z)d\mu_j(z) \neq 0.
\end{equation}
We conclude that $\ell \geq n_j$.
\end{proof}

The factorization \eqref{eq:factorization structure} on trigonometric form is given by
\begin{equation}
    T_{\bm{n}}^{(\tau)}(\theta) = \prod_{j=1}^k(1+r_j^2-2r_j\cos(\theta-\theta_j))\prod_{j=1}^{\abs{\bm{n}}-2k+1}\sin{\frac{\theta-\varphi_j}{2}}. 
\end{equation}
The above proof can be translated to the trigonometric point of view without substantial changes. A zero counting proof for AT systems in the case when every $n_j$ is even can be found in \cite{Mil}. We also remark that the above zero counting proof is a variation of a result from \cite{quasi popuc}, which locates zero of quasi-paraorthogonal polynomials. 

\begin{rem}
    Theorem \ref{thm:zero counting} verifies the criterion of Proposition \ref{prop:popuc normality} for Angelesco systems. Hence we have found a second proof of $\phi$-normality. A similar zero counting approach for AT systems, extending the proof of \cite{Mil} to any $\bm{n}$, gives a second proof of $\phi$-normality of AT systems as well. 
\end{rem}

\section{Zeros of Multiple Orthogonal Polynomials}\label{zeros of mopuc section}

Through the results of the previous section we are now ready to prove the main results of the paper. 

\begin{thm}\label{thm:zeros in unit disc}
    For Angelesco and AT systems, the polynomial $z^{\abs{\bm{n}}/2}\phi_{\bm{n}}(z)$ has $\abs{\bm{n}}$ zeros in $\D$, with each zero counted as many times as its multiplicity. 
\end{thm}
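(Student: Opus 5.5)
The plan is to mirror the classical OPUC argument. There, the zeros of $\Phi_n$ lie in $\D$ because the paraorthogonal polynomials $z\Phi_n+\tau\Phi_n^*$ have all their zeros on $\d\D$. Write $P(z)=z^{\abs{\bm{n}}/2}\phi_{\bm{n}}(z)$, a monic polynomial of degree $\abs{\bm{n}}$, and let $P^*(z)=z^{\abs{\bm{n}}}\overline{P(1/\bar z)}$ be its reversed polynomial.

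First I relate $P^*$ to $\phi_{\bm{n}}^\sharp$ and rewrite the paraorthogonal functions. Since $\phi_{\bm{n}}^\sharp(z)=\overline{\phi_{\bm{n}}(1/\bar z)}$ and $\overline{(1/\bar z)^{\abs{\bm{n}}/2}}$ equals $z^{-\abs{\bm{n}}/2}$ up to a unimodular constant depending on the branch, we get $z^{\abs{\bm{n}}/2}\phi_{\bm{n}}^\sharp(z)=cP^*(z)$ with $\abs{c}=1$; by \eqref{eq:monic type II mod sharp} one can in fact take $c=1$. Then
\begin{equation*}
z^{(\abs{\bm{n}}+1)/2}X_{\bm{n}}^{(\tau)}(z)=zP(z)+\tau' P^*(z)
\end{equation*}
for a unimodular $\tau'$ that runs over all of $\d\D$ as $\tau$ does. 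By Theorems~\ref{thm:at zeros} and~\ref{thm:angelesco zeros}, for every $\tau$ this polynomial of degree $\abs{\bm{n}}+1$ has all its zeros on $\d\D$; they are also simple, though simplicity is not needed below.

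Next I show that $P$ has no zeros on $\d\D$. Suppose $P(z_0)=0$ with $\abs{z_0}=1$. Then $P^*(z_0)=z_0^{\abs{\bm{n}}}\overline{P(z_0)}=0$, so $z_0$ is a common zero of $P$ and $P^*$. The Laurent polynomial $\phi_{\bm{n}}$ then has a factor $z^{-1/2}(z-z_0)$, and $\phi_{\bm{n}}^\sharp$ has the conjugate factor, which is the same up to a constant on the circle. Hence every $X_{\bm{n}}^{(\tau)}$ vanishes at $z_0$ for all $\tau$.

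\begin{itemize}
\item The first option is to contradict $\phi$-normality directly. Dividing $X_{\bm{n}}^{(\tau)}$ by the real factor $2\sin\frac{\theta-\varphi_0}{2}$, with $z_0=e^{i\varphi_0}$, as in Theorem~\ref{thm:simple zeros on circle}, should produce a Laurent polynomial orthogonal with respect to a modified system. This kills the degree via Proposition~\ref{prop:popuc normality} applied to the measures $\sin\frac{\theta-\varphi_0}{2}\,d\mu_j$.
\item A cleaner option uses only Theorems~\ref{thm:at zeros} and~\ref{thm:angelesco zeros}. At most one zero lies outside the interiors of the arcs, yet $z_0$ is a zero for every $\tau$, while the remaining zeros move with $\tau$. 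I would prefer the argument-principle step below, which handles this automatically.
\end{itemize}

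The main step is the counting argument. Put $B(z)=zP(z)/P^*(z)$, a rational function. On $\d\D$ we have $\abs{P^*}=\abs{P}$, so $\abs{B}=1$ there, provided $P$ has no unimodular zeros. The equation $zP+\tau'P^*=0$ says exactly $B(z)=-\tau'$. Write $d$ for the number of zeros of $P$ in $\D$; then $P$ has $\abs{\bm{n}}-d$ zeros outside $\bar\D$, and these reflect to zeros of $P^*$ in $\D$. By the argument principle, the winding number of $B$ along $\d\D$ is
\begin{equation*}
(1+d)-(\abs{\bm{n}}-d)=2d+1-\abs{\bm{n}}.
\end{equation*}
Since $B$ restricted to the circle maps into the circle, the winding number $W$ bounds from below the number of solutions of $B=-\tau'$ on $\d\D$: each value is attained at least $\abs{W}$ times. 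On the other hand, the solutions of $B=-\tau'$ on $\d\D$ are precisely the $\abs{\bm{n}}+1$ zeros of $zP+\tau'P^*$, all lying on the circle. To force $d=\abs{\bm{n}}$ I must rule out $W<\abs{\bm{n}}+1$, and for this I use local monotonicity. Where $B$ has nonzero derivative along the circle, the degree count of $zP+\tau'P^*$ shows that $\abs{\bm{n}}+1$ solutions exist for every $\tau'$. If $\abs{W}<\abs{\bm{n}}+1$, the argument of $B$ would have to backtrack. Backtracking produces either multiple solutions, contradicting the simplicity of the zeros, or values attained with the opposite orientation. In the latter case the number of solutions is $\abs{W}+2k=\abs{\bm{n}}+1$ for generic $\tau'$. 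Near a turning point, however, a double zero appears for some $\tau'$, contradicting the simplicity in Theorems~\ref{thm:at zeros} and~\ref{thm:angelesco zeros}. Hence the argument of $B$ is strictly monotone along $\d\D$ and $\abs{W}=\abs{\bm{n}}+1$, so either $d=\abs{\bm{n}}$ or $d=-1$. The latter is impossible, so $d=\abs{\bm{n}}$.

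I expect two obstacles: justifying that no turning points occur by using simplicity for all $\tau'$, and the branch and constant bookkeeping in identifying $z^{\abs{\bm{n}}/2}\phi_{\bm{n}}^\sharp$ with $P^*$.
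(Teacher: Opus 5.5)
\paragraph{Gap: unimodular zeros of $P$ are never excluded.}
Your winding-number step matches the paper's argument: $B=zP/P^*$, the argument principle, and strict monotonicity of $\arg B$, since a critical point $\varphi$ would make $e^{i\varphi}$ a double zero of $zP+\tau P^*$ for $\tau=-B(e^{i\varphi})$. Contrary to your remark, that step relies on the simplicity of the zeros in Theorems~\ref{thm:at zeros} and~\ref{thm:angelesco zeros}.

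The genuine gap is the preliminary claim that $P$ has no zeros on $\partial\mathbb{D}$, which you never establish. None of your three routes works:
\begin{itemize}
\item \emph{First option.} The quotient $Y$ lies in $\operatorname{span}\{z^p\}_{p=-\abs{\bm{n}}/2}^{\abs{\bm{n}}/2}$. Its invariant elements form a real space of dimension $\abs{\bm{n}}+1$, subject to only $\abs{\bm{n}}$ real orthogonality conditions. So the existence of $Y$ contradicts nothing, and Proposition~\ref{prop:popuc normality} (which concerns a smaller span) does not apply to it. A contradiction would require $\phi$-normality for the sign-changing weights $\sin\frac{\theta-\varphi_0}{2}\,d\mu_j$, which neither Theorem~\ref{thm:Angelesco} nor Theorem~\ref{thm:AT} provides.
\item \emph{Second option.} It is unfinished, and ``at most one zero outside the interiors'' can be vacuous. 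For example, for an AT system on the full circle, $\partial\mathbb{D}\setminus\operatorname{Int}(\Gamma)$ is a single point.
\item \emph{Argument principle.} It does not handle this automatically, because it presupposes $P^*\neq 0$ on $\partial\mathbb{D}$. If $P$ had $m\ge 1$ zeros on the circle, $zP+\tau P^*$ would vanish there for \emph{every} $\tau$. After cancelling this common factor from $zP$ and $P^*$, your monotonicity and winding count gives $\abs{\bm{n}}+1-m$ moving zeros and $d=\abs{\bm{n}}-m$. That is perfectly consistent, so counting alone yields no contradiction.
\end{itemize}

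\paragraph{The fix.}
The missing idea, and the paper's route, is one more use of simplicity at the fixed zero. Suppose $P(e^{i\varphi})=0$.
\begin{itemize}
\item The zero is simple. Otherwise it would also be a double zero of $P^*$, hence of every $X_{\bm{n}}^{(\tau)}$.
\item Because $P(e^{i\varphi})=0$, one has $\frac{d}{d\theta}P^*(e^{i\theta})\big|_{\theta=\varphi}=e^{i\abs{\bm{n}}\varphi}\,\overline{\frac{d}{d\theta}P(e^{i\theta})\big|_{\theta=\varphi}}$.
\item This has the same nonzero modulus as $\frac{d}{d\theta}\bigl(e^{i\theta}P(e^{i\theta})\bigr)\big|_{\theta=\varphi}=e^{i\varphi}\frac{d}{d\theta}P(e^{i\theta})\big|_{\theta=\varphi}$.
\item Hence some unimodular $\tau$ makes the $\theta$-derivative of $zP+\tau P^*$ vanish at $\varphi$. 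This gives a double zero of $X_{\bm{n}}^{(\tau)}$, contradicting Theorems~\ref{thm:at zeros} and~\ref{thm:angelesco zeros}.
\end{itemize}
In your cancelled picture, this $\tau$ is exactly $-B(e^{i\varphi})$. With this step added, the rest of your proof goes through as in the paper.
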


%\begin{thm}
%    All $\abs{\bm{n}}$ zeros of $\phi_{\bm{n}}$ lie in $\D$ for any Angelesco or AT system. 
%\end{thm}

\begin{proof}
    In this proof we work with $X_{\bm{n}}^{(\tau)}$ on the form \eqref{eq:paraorthogonal polynomials defn}. We first show that $P(z) = z^{\abs{\bm{n}}/2}\phi_{\bm{n}}(z)$ has no zeros on $\d\D$. If we have $\phi_{\bm{n}}(e^{i\varphi}) = 0$ for some angle $\varphi$ then we would have $X_{\bm{n}}^{(\tau)}(e^{i\varphi}) = 0$ for every $\tau \in \d\D$. Note that $e^{i\varphi}$ cannot be a double zero of $\phi_{\bm{n}}$, since then it is also a double zero of $X_{\bm{n}}^{(\tau)}$, which contradicts Theorem \ref{thm:at zeros}-\ref{thm:angelesco zeros}. For the polynomial $P^{(\tau)}(z) = z^{(\abs{\bm{n}}+1)/2}X_{\bm{n}}^{(\tau)}(z)$, we have
    \begin{equation}
        \frac{d}{d\theta}P^{(\tau)}(e^{i\theta})\Bigg|_{\theta = \varphi} = e^{i\varphi}\frac{d}{d\theta}P(e^{i\theta})\Bigg|_{\theta = \varphi} + \tau\frac{d}{d\theta}P^*(e^{i\theta})\Bigg|_{\theta = \varphi},
    \end{equation}
    where $P^*(z) = z^{\abs{\bm{n}}}\phi_{\bm{n}}^\sharp(z) = z^{\abs{\bm{n}}}\overline{P(1/\bar{z})}$. Hence we get a double zero of $P^{(\tau)}$ with the choice $\tau = -e^{i(1-\abs{\bm{n}})\varphi}\frac{d}{d\theta}P(e^{i\theta})/\frac{d}{d\theta}\overline{P(e^{i\theta})} \in \d\D$, which again contradicts Theorem \ref{thm:at zeros}-\ref{thm:angelesco zeros}. Hence all zeros of $P$ lie in $\C\setminus\d\D$.

    Now consider the Blaschke product
    \begin{equation}
        B(z) = \frac{zP(z)}{P^*(z)} = z\prod_{j = 1}^{\abs{\bm{n}}}\frac{z-z_j}{1-\bar{z}_j z}.
    \end{equation}
    On the unit circle we have
    \begin{equation}
        B(e^{i\theta}) = e^{i\theta}\prod_{j = 1}^{\abs{\bm{n}}}\frac{e^{-i\theta}(e^{i\theta}-z_j)^2}{\abs{e^{i\theta}-z_j}^2} = e^{i\Psi(\theta)}, \quad \theta \in [-\pi,\pi),
    \end{equation}
    where $\Psi$ is the continuous phase function
    \begin{equation}
        \Psi(\theta) = \theta + 2\sum_{j = 1}^{\abs{\bm{n}}}\brkt{-\frac{\theta}{2} + \operatorname{Arg}(e^{i\theta}-z_j)}.
    \end{equation}
    By the argument principle we have 
    \begin{equation}
        \Psi(\pi-) - \Psi(-\pi) = 2\pi(n_++1-n_-), 
    \end{equation}
    where $n_+$ the number of zeros of $P$ in $\D$ and $n_-$ is the number of zeros of $P$ outside $\D$. 

    Note that $B(e^{i\theta}) = -\tau$ when $P^{(\tau)}(e^{i\theta}) = 0$, in which case $\Psi(\theta) = \operatorname{Arg}(-\tau) + 2\pi k$ for some integer $k$. We claim that $\Psi$ is strictly monotone, so that we would get $\Psi(\pi-) - \Psi(-\pi) = \pm2\pi (\abs{\bm{n}}+1)$, as $P^{(\tau)}$ has $\abs{\bm{n}}+1$ simple zeros on $\d\D$. Since $P$ has degree $\abs{\bm{n}}$, this would imply $n_+ = \abs{\bm{n}}$ and finish the proof. 

    To see that $\Psi$ is monotone, suppose $\Psi'(\varphi) = 0$ for some angle $\varphi$. %Then the derivative of $B(e^{i\theta})$ vanishes at $\varphi$ as well. 
    By differentiating $B(e^{i\theta}) = e^{i\Psi(\theta)}$ we obtain
    \begin{equation}\label{eq:derivative of B}
        \frac{d}{d\theta}(e^{i\theta}P(e^{i\theta}))\Bigg|_{\theta = \varphi}P^*(e^{i\varphi})-e^{i\varphi}P(e^{i\varphi})\frac{d}{d\theta}P^*(e^{i\theta})\Bigg|_{\theta = \varphi} = 0.
    \end{equation}
    With $\tau = -B(e^{i\varphi})$ it follows from \eqref{eq:derivative of B} that
    \begin{align*}
        \frac{d}{d\theta}P^{(\tau)}(e^{i\theta}) & = \frac{d}{d\theta}(e^{i\theta}P(e^{i\theta}))\Bigg|_{\theta = \varphi} + \tau \frac{d}{d\theta}P^*(e^{i\theta})\Bigg|_{\theta = \varphi} \\ & = \frac{d}{d\theta}P^*(e^{i\theta})\Bigg|_{\theta = \varphi}\brkt{B(e^{i\varphi}) + \tau} \\ & = 0.
    \end{align*}
    Hence $P^{(\tau)}$ has a double zero, which contradicts Theorem \ref{thm:at zeros}-\ref{thm:angelesco zeros}.
\end{proof}

Now we turn to the Hermite--Pad\'e polynomials $\Phi_{\bm{n},\bm{m}}$ of \cite{KVHP}. The previous result solves the case $\bm{m} = \bm{n}$. Through a modification of the above proof we can now obtain normality in the case $\bm{m} = \bm{n}\pm\bm{e}_k$. 

\begin{thm}\label{thm:normality of hp polynomials}
    Let $\bm{\mu}$ be an Angelesco system or an AT system. Then the multi-indices $(\bm{n},\bm{n}+\bm{e}_j)$ and $(\bm{n}+\bm{e}_j,\bm{n})$ are normal for each $j = 1,\dots,r$. %Furthermore, $\Phi_{\bm{n},\bm{n}+\bm{e}_j}$
\end{thm}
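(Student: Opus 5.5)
By Proposition \ref{prop:n,m normality vs m,n normality}, $(\bm{n},\bm{n}+\bm{e}_j)$ is normal if and only if $(\bm{n}+\bm{e}_j,\bm{n})$ is normal, so I would only treat $(\bm{n}+\bm{e}_j,\bm{n})$. By Proposition \ref{prop:normality criteria hp}, this index fails to be normal exactly when there is a nonzero
\[
\Phi\in\operatorname{span}\{z^p\}_{p=-\abs{\bm{n}}}^{\abs{\bm{n}}}
\]
satisfying $\int\Phi(w)w^{-p}\,d\mu_k(w)=0$ for $p=-n_k,\dots,n_k-1$ when $k\neq j$, and for $p=-n_j,\dots,n_j$ when $k=j$. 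Dropping the last condition ($p=n_j$ for $\mu_j$) leaves exactly the defining relations of $\phi_{2\bm{n}}=\Phi_{\bm{n},\bm{n}}$ on the same span. That span has dimension $2\abs{\bm{n}}+1$ and there are $2\abs{\bm{n}}$ conditions. By Theorems \ref{thm:Angelesco}, \ref{thm:AT} and Proposition \ref{prop:phi normality criteria}, $2\bm{n}$ is $\phi$-normal, so the solution space of these conditions is one-dimensional and spanned by $\phi_{2\bm{n}}$. Hence normality of $(\bm{n}+\bm{e}_j,\bm{n})$ is equivalent to the single non-vanishing statement
\[
c:=\int\phi_{2\bm{n}}(z)\,z^{-n_j}\,d\mu_j(z)\neq 0 .
\]
This is the analogue of the statement "the next recurrence coefficient is nonzero", and it is all that remains to prove.

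To prove $c\neq0$ I would argue by contradiction, following the paraorthogonal route of Section \ref{zeros of multiple popuc section}. Assume $c=0$. Then $\phi:=\phi_{2\bm{n}}$ is orthogonal to $z^{p}$ for $p=-n_j,\dots,n_j$ against $\mu_j$. Conjugating, $\phi^\sharp$ satisfies the same symmetric block of relations for $\mu_j$, and for $k\ne j$ it satisfies the relations with $p=-n_k+1,\dots,n_k$. For $\tau\in\d\D$ I would form the $\tau$-invariant combination
\[
Y_\tau=\phi+\tau\phi^\sharp\in\operatorname{span}\{z^p\}_{p=-\abs{\bm{n}}}^{\abs{\bm{n}}},
\]
which has the leading coefficient $1$ and so is nontrivial for generic $\tau$. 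It satisfies symmetric orthogonality conditions: $p=-n_k+1,\dots,n_k-1$ for $\mu_k$ with $k\ne j$, and $p=-n_j,\dots,n_j$ for $\mu_j$.

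The plan is then to run the argument of Theorems \ref{thm:zeros outside circle}--\ref{thm:zero counting} and \ref{thm:zeros in unit disc} on $Y_\tau$. On $\d\D$, $\tau^{-1/2}Y_\tau$ is a real trigonometric polynomial of degree $\abs{\bm{n}}$, so it has at most $2\abs{\bm{n}}$ zeros. For Angelesco systems I would use the zero counting of Theorem \ref{thm:zero counting}: the symmetric relations force $Y_\tau$ to have at least $2n_k-1$ sign changes in $\operatorname{Int}(\Gamma_k)$ and at least $2n_j+1$ in $\operatorname{Int}(\Gamma_j)$. I would then try to use the parity of sign changes of a periodic real function, together with the one extra zero allowed outside $\bigcup\operatorname{Int}(\Gamma_k)$ as in Theorem \ref{thm:angelesco zeros}, to push the count to at least $2\abs{\bm{n}}+1$, which is a contradiction. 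For AT systems I would do the same with the Chebyshev property of $\operatorname{Trig}(\bm{\mu})$, used in the form "a nontrivial polynomial in the Chebyshev span orthogonal to many elements has many sign changes". Alternatively, I would combine with $z^{\pm1/2}$-shifts of $\phi$, $\phi^\sharp$ to obtain an $X$-type function, as in Definition \ref{def:para}, with one orthogonality relation too many. If the pure counting is too weak, the fallback is the Blaschke-product and monotone-phase argument of Theorem \ref{thm:zeros in unit disc}: $c=0$ should produce some $\tau$ for which $P^{(\tau)}$ has a double zero, or an extra zero off $\bigcup\operatorname{Int}(\Gamma_k)$, contradicting Theorems \ref{thm:at zeros}--\ref{thm:angelesco zeros}.

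The main obstacle is this counting step. The naive count of conditions on $Y_\tau$ is $2\abs{\bm{n}}-r+2$, which is short of $2\abs{\bm{n}}+1$ whenever $r\ge2$. So the contradiction cannot come from dimension alone; it must come from the parity of sign changes on each arc and from the location of zeros (Theorems \ref{thm:at zeros}--\ref{thm:angelesco zeros}, \ref{thm:zero counting}). Choosing the right shifted or $\tau$-twisted combination of $\phi_{2\bm{n}}$ and $\phi_{2\bm{n}}^\sharp$, so that the assumption $c=0$ yields exactly one forbidden extra zero or a double zero, is where I expect the real work to be.
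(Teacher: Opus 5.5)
\textbf{The gap: $c\neq 0$ is never proved.} Your reduction is correct, and it is sharper than what the paper isolates. By Proposition~\ref{prop:n,m normality vs m,n normality} and the $\phi$-normality of $2\bm{n}$, both indices are normal if and only if $c=\int\phi_{2\bm{n}}(z)z^{-n_j}\,d\mu_j(z)\neq 0$. When $c=0$, $z^{-1}\phi_{2\bm{n}}^\sharp$ is a degenerate solution of the $(\bm{n},\bm{n}+\bm{e}_j)$ problem. But you only list strategies for $c\neq0$, and you correctly note that sign changes of $Y_\tau$ alone give only $2\abs{\bm{n}}-r+2$. That count is compatible with degree $\abs{\bm{n}}$: for an Angelesco system with $r=2$, $2n_j+1$ changes on $\Gamma_j$ plus $2n_k-1$ on $\Gamma_k$ is exactly $2\abs{\bm{n}}$.

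Your Blaschke fallback is essentially the paper's route: form $z^{1/2}\Phi+\tau z^{-1/2}\Phi^\sharp$ from a solution $\Phi$ of the $(\bm{n},\bm{n}+\bm{e}_j)$ problem and study $P=z^{\abs{\bm{n}}+1}\Phi$. This cannot detect $c$. For $\Phi=z^{-1}\phi_{2\bm{n}}^\sharp$ one gets $P=z^{\abs{\bm{n}}}\phi^\sharp_{2\bm{n}}$ and $P+\tau P^*=\tau z^{\abs{\bm{n}}+1/2}X^{(\bar\tau)}_{2\bm{n}}$. So all zeros are simple and unimodular and the phase is monotone regardless. The reversal $P^*$ carries an extra factor $z^{2\abs{\bm{n}}+1-\deg P}$, so the argument principle gives winding $2n_+-(2\abs{\bm{n}}+1)$. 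This yields only $n_+\in\{0,2\abs{\bm{n}}+1\}$. The case $n_+=0$ with $\deg P\le 2\abs{\bm{n}}$ is exactly this degenerate $P$. The paper's concluding ``either case proves $\deg P=2\abs{\bm{n}}+1$'' passes over this case, so copying that step will not close your gap.

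\textbf{What closes it for Angelesco systems: a phase budget} linking $\re(\lambda\phi_{2\bm{n}})$ to the paraorthogonal zeros.
\begin{enumerate}
\item Write $\phi_{2\bm{n}}(e^{i\theta})=\abs{\phi_{2\bm{n}}(e^{i\theta})}e^{i\Theta(\theta)}$. Let $z_k$ be the zeros of $z^{\abs{\bm{n}}}\phi_{2\bm{n}}$, all in $\D$ by Theorem~\ref{thm:zeros in unit disc}. Then $\Theta'=\frac12\sum_k(1-\abs{z_k}^2)/\abs{e^{i\theta}-z_k}^2>0$.
\item $\Lambda=\theta+2\Theta$ is the phase of $z\phi_{2\bm{n}}/\phi_{2\bm{n}}^\sharp$. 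Its level sets $\Lambda\equiv\arg(-\tau)$ are the zeros of $X^{(\tau)}_{2\bm{n}}$, and its total increase is $2\pi(2\abs{\bm{n}}+1)$.
\item Theorem~\ref{thm:zero counting} for $2\bm{n}$ gives at least $2n_k$ zeros in $\operatorname{Int}(\Gamma_k)$ for every $\tau$. This forces $\Lambda$ to increase by more than $4\pi n_k$ over each $\Gamma_k$, hence by at most $(4n_j+2)\pi$ over $\Gamma_j$.
\item If $c=0$, every $\re(\lambda\phi_{2\bm{n}})$ is $\mu_j$-orthogonal to all real trigonometric polynomials of degree $\le n_j$. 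So it has at least $2n_j+1$ sign changes in $\operatorname{Int}(\Gamma_j)$ for every $\lambda$.
\item Choosing $\arg\lambda$ adversarially, this forces $\Theta$ to increase by more than $(2n_j+1)\pi$ over $\Gamma_j$. Then $\Lambda$ increases by more than $\abs{\Gamma_j}+(4n_j+2)\pi$ there, where $\abs{\Gamma_j}$ is the arc length. This contradicts step 3.
\end{enumerate}
Equivalently, some $X^{(\tau)}_{2\bm{n}}$ would get $2n_j+2$ zeros in $\operatorname{Int}(\Gamma_j)$, which is one too many in total.

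For AT systems all $\mu_k$ sit on the same arc. There this bookkeeping again forces only $2\abs{\bm{n}}-r+2$ sign changes, so a further argument is still needed.
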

\begin{proof}
    By Proposition \ref{prop:normality criteria hp} it suffices to show that the polynomial $P(z) = z^{\abs{\bm{n}}+1}\Phi_{\bm{n},\bm{n}+\bm{e}_j}(z)$ has $2\abs{\bm{n}}+1$ zeros (with each zero counted according to its multiplicity), given the orthogonality relations and degree restrictions of $\Phi_{\bm{n},\bm{n}+\bm{e}_j} \not\equiv 0$. Note that $\Phi(z) = z^{1/2}\Phi_{\bm{n},\bm{n}+\bm{e}_j}(z)$ satisfies the orthogonality relations
    \begin{equation}
        \int \Phi(z)z^{-p}d\mu_j(z) = 0, \quad p = -n_j+1/2,\dots,n_j-1/2, \quad j = 1,\dots,r. 
    \end{equation}
    
    Now consider the function
    \begin{equation}
        X_{2\bm{n}}^{(\tau)}(z) = z^{1/2}\Phi_{\bm{n},\bm{n}+\bm{e}_j}(z) + \tau z^{-1/2}\Phi_{\bm{n},\bm{n}+\bm{e}_j}^\sharp(z). 
    \end{equation}
    Observe that $X^{(\tau)}_{2\bm{n}}(z)\in\operatorname{span}\{z^p\}_{p=-|\bm{n}|-1/2}^{|\bm{n}|+1/2}$ and satisfies \eqref{eq:orthoPOPUC}--\eqref{eq:beta invariance} for index $2\bm{n}$. Note that its highest coefficient is not $1$, unlike the polynomial defined in~\eqref{eq:paraorthogonal polynomials defn}.
    
    By Proposition \ref{prop:popuc normality}, the polynomial $P^{(\tau)}(z) = z^{\abs{n}+1/2}X_{2\bm{n}}^{(\tau)}(z)$ has degree $2\abs{\bm{n}}+1$. We also have
    \begin{equation}
        P^{(\tau)}(z) = P(z) + \tau P^*(z),
    \end{equation}
    where $P^*(z) = z^{2\abs{\bm{n}}+1}\overline{P(1/\bar{z})}$. From here we can repeat the ideas from the proof of Theorem \ref{thm:zeros in unit disc}.
    
    First, it follows similarly that $P$ cannot have zeros on $\d\D$, and then we can introduce the Blaschke product $B(z) = P(z)/P^*(z)$. We have $B(e^{i\theta}) = e^{i\Psi(\theta)}$ for the continuous phase function 
    \begin{equation}
        \Psi(\theta) = 2\sum_{j = 1}^{N}\brkt{-\frac{\theta}{2} + \operatorname{Arg}(e^{i\theta}-z_j)}, \quad \theta \in [-\pi,\pi),
    \end{equation}
    where $N = \deg{P} \leq 2\abs{\bm{n}}+1$ and $z_1,\dots,z_N$ are the zeros of $P$. The argument principle then gives $\Psi(\pi-)-\Psi(-\pi) = 2\pi(n_+-n_-) = \pm2\pi(2\abs{\bm{n}}+1)$, where the sign depends on whether $\Psi$ is increasing or decreasing. Depending on the sign we then have either $2\abs{\bm{n}}+1$ zeros in $\D$ or $2\abs{\bm{n}}+1$ zeros in $\C\setminus\bar\D$. Either case proves $\deg{P} = 2\abs{\bm{n}}+1$, which completes the proof. 
    \end{proof}

\bigskip\bigskip

\bibsection

%\end{thebibliography}

%%%%%%%%%%%%%%%%%%%%%%%%%%%%%%%%%%%%%%%%%%%%%%%%%%%%%%%%

\end{document}